\renewcommand*{\backref}[1]{}
\renewcommand*{\backrefalt}[4]{[{\footnotesize%
		\ifcase #1 Not cited.%
		\or Cited on page~#2.%
		\else Cited on pages #2.%
		\fi%
	}]}
\newcommand{\N}{\mathbb{N}}
\newcommand{\Z}{\mathbb{Z}}
\newcommand{\R}{\mathbb{R}}
\newcommand{\C}{\mathbb{C}}
\newcommand{\fgrad}{\nabla f}
\newcommand{\lyp}{\mathcal{E}}
\newcommand{\pro}[2]{\left\langle{#1},{#2}\right\rangle}
\newcommand{\dif}[2]{\frac{{\mathrm d}{#1}}{{\mathrm d}{#2}}}
\newcommand{\Order}[1]{\mathrm{O} \left( #1 \right) }
\newcommand{\rd}{\mathrm{d}}
\def\trans{\text{\tiny\sf T}}
\let\Re\relax 
\DeclareMathOperator{\Re}{Re}
\DeclarePairedDelimiter\paren{\lparen}{\rparen}
\DeclarePairedDelimiterX{\inpr}[2]{\langle}{\rangle}{{#1},{#2}}
\DeclarePairedDelimiterX{\L2ip}[2]{\langle}{\rangle_{L^2}}{{#1},{#2}}
\DeclarePairedDelimiterX{\setI}[2]{\{}{\}}{\,{#1}\ \delimsize| \ {#2}\,}
\DeclarePairedDelimiter{\setE}{\{}{\}}
\DeclarePairedDelimiter{\abs}{|}{|}
\DeclarePairedDelimiter{\norm}{\|}{\|}
\definecolor{mygreen}{rgb}{0,0.5,0}
\definecolor{myred}{rgb}{0.8,0,0}
\definecolor{myblue}{rgb}{0,0,0.5}
\newtheorem{theorem}{Theorem}[section]
\newtheorem{proposition}{Proposition}[section]
\newtheorem{lemma}[proposition]{Lemma}
\newtheorem{definition}{Definition}[section]
\newtheorem{remark}{Remark}[section]
\crefname{theorem}{Theorem}{Theorems}
\crefname{lemma}{Lemma}{Lemmas}
\crefname{proposition}{Proposition}{Propositions}
\crefname{corollary}{Corollary}{Corollaries}
\crefname{definition}{Definition}{Definitions}
\crefname{example}{Example}{Examples}
\crefname{remark}{Remark}{Remarks}
\crefname{figure}{Figure}{Figures}
\crefname{table}{Table}{Tables}
\crefname{section}{Section}{Sections}
\crefname{subsection}{Section}{Sections}
\title[Nesterov acceleration via linear multistep methods]{A novel interpretation of Nesterov's acceleration\\ via variable step-size linear multistep methods}
\begin{document}

\author{Ryota Nozawa}
\address{Department of Mathematical Informatics,
        Graduate School of Information Science and Technology,
        The University of Tokyo, Tokyo, Japan.}

\author{Shun Sato}
\address{Department of Mathematical Informatics,
        Graduate School of Information Science and Technology,
        The University of Tokyo, Tokyo, Japan.}
\email{shun@mist.i.u-tokyo.ac.jp}

\author{Takayasu Matsuo}
\address{Department of Mathematical Informatics,
        Graduate School of Information Science and Technology,
        The University of Tokyo, Tokyo, Japan.}

\begin{abstract}
Nesterov's acceleration in continuous optimization can be understood in a novel way when Nesterov's accelerated gradient (NAG) method is considered as a linear multistep (LM) method for gradient flow. 
Although the NAG method for strongly convex functions (NAG-sc) has been fully discussed, the NAG method for $L$-smooth convex functions (NAG-c) has not.
To fill this gap, we show that the existing NAG-c method can be interpreted as a variable step size LM (VLM) for the gradient flow. 
Surprisingly, the VLM allows linearly increasing step sizes, which explains the acceleration in the convex case. 
Here, we introduce a novel technique for analyzing the absolute stability of VLMs. 
Subsequently, we prove that NAG-c is optimal in a certain natural class of VLMs. 
Finally, we construct a new broader class of VLMs by optimizing the parameters in the VLM for ill-conditioned problems. 
According to numerical experiments, the proposed method outperforms the NAG-c method in ill-conditioned cases.
These results imply that the numerical analysis perspective of the NAG is a promising working environment, and considering a broader class of VLMs could further reveal novel methods.
\end{abstract}
\keywords{
linear multistep methods; variable step size; stability; 
convex optimization; Nesterov's accelerated gradient method}
\subjclass{65L06 \and 65L20 \and 90C25}

\maketitle

\section{Introduction}

In this study, we use the gradient flow
\begin{equation}\label{eq:gf}
    \dot{x}(t)=-\nabla f(x)\qquad(x(0)=x_0),
\end{equation}
and consider the unconstrained optimization problem
\begin{equation}{\label{main problem}}    
    \min_{x\in \mathbb{R}^d} \; f(x),
\end{equation}
where $f \colon \R^d \to \R $ is $L$-smooth ($f$ is differentiable and the gradient $ \nabla f $ is $L$-Lipschitz continuous) and convex. Let $x^* \in \R^d $ denote an optimal solution of \eqref{main problem}.
The gradient descent method $ x_{n+1} = x_n - h_n \nabla f ( x_n ) $ for solving the problem can be regarded as an explicit Euler method for the gradient flow~\eqref{eq:gf}. 
The convergence rate of the gradient descent method is 
$
    f(x_n)-f(x^*)=\Order{\frac{1}{n}}
$
(cf.~\cite{N2004}),
whereas that of the gradient flow is 
$
    f(x(t))-f(x^*)=\Order{\frac{1}{t}}
$
(cf.~\cite{W18}).
Therefore, the convergence rate of the optimization method can be intuitively understood from its differential equation.
Su et al.~\cite{SBC16} investigated a second-order differential equation $ \ddot{x} + \frac{3}{t} \dot{x} + \nabla f(x) = 0 $ that corresponds to Nesterov's accelerated gradient method for convex functions (NAG-c)~\cite{N83}. 
The convergence rates of NAG-c and its continuous-time limit are $\Order{{1}/{n^2}}$ and $\Order{{1}/{t^2}}$, respectively. 
This seminal work was followed by several studies~\cite{ACR19, AP16,SZ21,SRBd17,SDJS2022,SDSJ19,USM2023NeurIPS,ZMSJ18,KWB15} (see, e.g., \cite{GMMQS17,RLS18,REQS22,MSZ18,CEOR18} for other studies on the numerical analysis approach to continuous optimization problems).
However, the second-order differential equation asymptotically tends toward a conservative Hamiltonian system, which appears to contradict the better rate than the gradient flow. 

To overcome this difficulty, Scieur et al.~\cite{SRBd17} demonstrated that the NAG method for strongly convex (NAG-sc) functions can be regarded as a linear multistep (LM) method (cf.~\cite{HNW1987}) for the gradient flow~\eqref{eq:gf}.
Furthermore, it was demonstrated that the NAG-sc is zero-stable and consistent, and its acceleration can be attributed to the larger step sizes allowed by the strong stability of the LM method.
Additionally, they applied a similar LM interpretation to NAG-c.
Although this interpretation provides several useful insights, it is not entirely correct from a numerical analysis perspective because LMs are expressed by fixed recurrence formulas, which are consistent with NAG-sc whose coefficients are fixed, except for the step sizes. 
However, because NAG-c includes a time-dependent coefficient, $(n-1)/(n+2)$ (see~\eqref{eq:Nesterov_acc}), the NAG-c cannot be expressed as an LM as long as the step size is fixed.
This study aims to demonstrate that this shortcoming can be addressed by utilizing the concept of {\em variable step size} LMs (VLMs) (see \cref{sec:vlm} for details on VLMs). 
This approach provides a novel perspective that enhances the working environment and extends the scope of NAG-type methods.

Next, we present our target and notations.
Let $f$ be an $L$-smooth convex function.
NAG-c is expressed as follows:
\begin{equation} \label{eq:Nesterov_acc}
    \begin{cases}
        {\displaystyle y_{n+1}=x_n-s\nabla f(x_n),}\\
        {\displaystyle x_{n+1}=y_{n+1}+\frac{n-1}{n+2}(y_{n+1}-y_{n}).}
    \end{cases}
\end{equation}
By removing $y_n$ from~\eqref{eq:Nesterov_acc}, we obtain a new two-step recurrence formula for $x_n$:
  \begin{equation}\label{eq:NAG_multistep}
    x_{n+2} - \frac{2n+3}{n+3} x_{n+1} + \frac{n}{n+3} x_n
    = -s \frac{2n+3}{n+3} \nabla f(x_{n+1}) + s\ \frac{n}{n+3} \nabla f(x_n).
  \end{equation}
This is the main objective of this study. 

The contributions of this study are as follows:
(i) We demonstrate that~\eqref{eq:NAG_multistep} is a stable and consistent VLM (\cref{sec:nag-vlm}). 
Furthermore, we show that the VLM allows {\em linearly increasing} step sizes, which explains why NAG-c achieves a convergence rate of $\Order{1/n^2}$~\cite{N2004}, 
even though (in the current view) it is derived from the gradient flow whose convergence rate is $\Order{1/t}$. 
This provides a simpler and more intuitive perspective of acceleration than second-order differential equation interpretations. 
In this study, we introduce a novel technique for analyzing the absolute stability of VLMs. 
(ii) Considering the contributions in (i), the best VLM among the stable and consistent two-step VLMs is investigated. 
We prove that NAG-c is optimal (\cref{sec:optimality}).
(iii) Considering the contributions in (ii), we must move beyond the scope of stable and consistent VLMs to outperform NAG-c.
Therefore, we remove the consistency condition and optimize the parameters in the two-step VLM for ill-conditioned objective functions (\cref{sec:beyondNAG}).
According to numerical experiments, the proposed method outperforms NAG-c and converges rapidly.

\section{VLMs}\label{sec:vlm}

We introduce several definitions and properties of VLMs based on~\cite{HNW1987,HW1996}.
The approximations $x_{n} \simeq x(t_n)$ of the integral curve $x(t)$ are computed on a grid $t_n$. The variable step size indicates that $h_n := t_{n+1} -t_{n}$ is not constant.

\begin{definition}{\label{def:variableLM}}
    The $k$-step VLM for $\dot{x}(t) = g(x(t))$ is defined as follows:
    \begin{equation}{\label{def:eq:vlm}}
        x_{n+k}+\sum_{j=0}^{k-1}\alpha_{j,n} x_{n+j}=h_{n+k-1}\sum_{j=0}^k \beta_{j,n}g(x_{n+j}),
    \end{equation}
    where $\alpha_{j,n},\ \beta_{j,n}$ are constants that depend on $w_i=h_{i}/h_{i-1}\;(i=n+1,...,n+k-1)$.
\end{definition}
If $\beta_{k,n} = 0$, we can compute $x_{n+k}$ using its explicit form, and the method is said to be explicit. 
Otherwise, we must solve a nonlinear equation to obtain a new point $ x_{n+k}$. 
Although implicit methods are useful in numerical analysis and have been extensively studied, 
we consider only explicit methods in subsequent sections, considering their application to optimization. 

\subsection{Stability}{\label{subsec:stability}}

  When dealing with LMs, we typically consider the following two types of stability: for $\dot{x}(t) = 0$ and Dahlquist's test equation $\dot{x}(t) = \lambda x(t)$, where $ \lambda $ is a complex number.
  
  The stability of $\dot{x}(t) = 0$ is referred to as \emph{zero-stability}, which is defined as follows.
  
  \begin{definition}{\label{def:stable}}
    Let $A_n \in \R^{ k \times k  }$ be a companion matrix defined by 
    \[
        A_n=
            \begin{bmatrix}
                -\alpha_{k-1,n}&\cdots&\cdots&-\alpha_{1,n}&-\alpha_{0,n}\\
                1&0&\cdots &0&0\\
                &1& \ddots &\vdots& \vdots\\
                &&\ddots& 0 &\vdots\\
                &&&1&0                
            \end{bmatrix}.
    \]
    The VLM~\eqref{def:eq:vlm} is zero-stable if 
    there exists a positive real number $M$ such that
    $
        \|A_{n+l}...A_{n+1}A_{n}\|\le M
    $
    holds for all $n,l\ge 0$. 
  \end{definition}

The following lemma is useful in confirming zero-stability. 
Here, by assuming consistency of order $0$ (see~\cref{subsec:consistency} for details), we reduce the size of matrix $A_n$ by one. 

\begin{lemma}[\protect{cf.~\cite[Theorem~5.6]{HNW1987}}]\label{lem:zero-stability of vlm}
    Let the method~\eqref{def:eq:vlm} be consistent of order $p\ge 0$.
    A companion matrix $A_n^* \in \R^{ (k-1) \times (k-1) }$ is defined as follows:
        \[
            A_n^*=
                \begin{bmatrix}
                    -\alpha_{k-2,n}^*&-\alpha_{k-3,n}^*& \cdots &-\alpha_{1,n}^*&-\alpha_{0,n}^*\\
                    1 & 0 & \cdots & 0&0\\
                     & 1 & \ddots & \vdots & \vdots\\
                     &  & \ddots & 0 &\vdots\\
                     &  &  & 1& 0 
                \end{bmatrix},
        \]
        where $\alpha^*_{k-2,n}=1+\alpha_{k-1,n}$, $\alpha_{0,n}^*=-\alpha_{0,n}$, $\alpha^*_{k-j-1,n}-\alpha^*_{k-j,n}=\alpha_{k-j,n}$ $(j=2,...,k-1)$.
        Then, the method~\eqref{def:eq:vlm} is zero-stable if and only if the following conditions hold:
        \begin{itemize}
        \item[(i)] there exists $M_1 > 0 $, for all $n,l \in \Z_{\ge 0} $, $ \norm*{ A^*_{n+l}...A^*_{n+1}A^*_{n} }\le M_1 $, and
        \item[(ii)] there exists $M_2 > 0 $, for all $n,l \in \Z_{\ge 0} $, $ \norm*{ e_{k-1}^\trans \sum_{j=n}^{n+l}\prod_{i=n}^{j-1}A_{i}^*} \le M_2 $,
    \end{itemize}
        where $e_{k-1} = (0,...,0,1)^\trans \in \mathbb{R}^{k-1}$.
\end{lemma}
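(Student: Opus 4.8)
The plan is to exploit consistency of order $0$, which supplies the order-$0$ consistency identity $1+\sum_{j=0}^{k-1}\alpha_{j,n}=0$ for every $n$ (see \cref{subsec:consistency}). Geometrically this says the all-ones vector $\mathbf 1=(1,\dots,1)^\trans$ obeys $A_n\mathbf 1=\mathbf 1$ for all $n$, so the companion matrices share a common invariant direction. I would therefore seek a fixed (that is, $n$-independent) change of coordinates that factors out this direction and exposes an induced $(k-1)$-dimensional dynamics, which will turn out to be governed precisely by $A_n^*$.

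Concretely, I would introduce the differences $\Delta x_m := x_{m+1}-x_m$ and apply summation by parts to the homogeneous recurrence $\sum_{j=0}^{k}\alpha_{j,n}x_{n+j}=0$ (with $\alpha_{k,n}=1$) attached to $\dot x=0$. Writing $S_j:=\sum_{i=0}^{j}\alpha_{i,n}$, consistency gives $S_k=0$ and $S_{k-1}=-1$, and Abel summation collapses the recurrence to $\Delta x_{n+k-1}=\sum_{j=0}^{k-2}S_j\,\Delta x_{n+j}$. A short check confirms that $-S_j$ reproduces the stated coefficients: the telescoping relation $\alpha^*_{k-j-1,n}-\alpha^*_{k-j,n}=\alpha_{k-j,n}$ is exactly $S_{k-j}-S_{k-j-1}=\alpha_{k-j,n}$, while $\alpha^*_{0,n}=-S_0=-\alpha_{0,n}$ and $\alpha^*_{k-2,n}=-S_{k-2}=1+\alpha_{k-1,n}$. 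Hence the difference sequence is propagated by the reduced companion matrix $A_n^*$.

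Packaging this, I would use the fixed linear bijection $T$ sending the state $(x_{n+k-1},\dots,x_n)^\trans$ to $(\Delta x_{n+k-2},\dots,\Delta x_n,x_n)^\trans$. Since $x_{n+1}=x_n+\Delta x_n$ and $\Delta x_n$ is the last entry picked out by $e_{k-1}^\trans$, conjugation yields the block lower-triangular form
\[
    T A_n T^{-1}=\begin{bmatrix} A_n^* & 0\\ e_{k-1}^\trans & 1\end{bmatrix}.
\]
Multiplying these out, the diagonal blocks simply compose while the $(2,1)$ block accumulates, so $T A_{n+l}\cdots A_n T^{-1}$ has top-left block $A_{n+l}^*\cdots A_n^*$, bottom-left block $e_{k-1}^\trans\sum_{j=n}^{n+l}\prod_{i=n}^{j-1}A_i^*$ (with the empty product at $j=n$ contributing the $e_{k-1}^\trans$ term), and bottom-right entry $1$. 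Because $T$ is independent of $n$, the norms $\norm*{A_{n+l}\cdots A_n}$ and $\norm*{T A_{n+l}\cdots A_n T^{-1}}$ are uniformly comparable, and for a block matrix uniform boundedness of the whole is equivalent to uniform boundedness of each block; discarding the trivial bottom-right entry leaves exactly conditions (i) and (ii).

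The main obstacle I anticipate is bookkeeping rather than conceptual: getting the summation-by-parts indexing right so that $-S_j$ matches the telescoping definition of $\alpha_{j,n}^*$, and correctly identifying the accumulated $(2,1)$ block of the matrix product as the partial sum $\sum_{j=n}^{n+l}\prod_{i=n}^{j-1}A_i^*$. The remaining norm equivalence is routine, resting only on the fact that $T$ is a fixed invertible matrix.
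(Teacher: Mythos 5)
Your proposal is correct, and all the bookkeeping checks out: order-$0$ consistency gives $S_k=0$, $S_{k-1}=-1$, the identification $\alpha^*_{j,n}=-S_j$ reproduces exactly the telescoping definition in the statement, the similarity $T A_n T^{-1}=\bigl[\begin{smallmatrix} A_n^* & 0\\ e_{k-1}^\trans & 1\end{smallmatrix}\bigr]$ holds, and the accumulated bottom-left block of the product is precisely $e_{k-1}^\trans\sum_{j=n}^{n+l}\prod_{i=n}^{j-1}A_i^*$. Note that the paper does not actually prove \cref{lem:zero-stability of vlm}; it defers entirely to the cited result of Hairer--N{\o}rsett--Wanner, and your argument is essentially the standard one underlying that citation: consistency of order $0$ forces $\zeta=1$ to be a root of the characteristic polynomial, passing to difference variables factors out this root via a fixed ($n$-independent) change of coordinates, and uniform boundedness of the block-triangular products then splits into conditions (i) and (ii) plus the trivial bottom-right entry. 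So your proposal supplies a self-contained proof of what the paper only asserts by reference, which is a useful addition rather than a deviation.
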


  The second stability, referred to as \emph{absolute stability}, is defined for Dahlquist's test equation $\dot{x}(t) = \lambda x(t) \; (\Re{\lambda}<0)$. 
  However, the absolute stability of VLMs has not been extensively studied, with only a few case-specific results available (cf.~\cite{CMR1993}). 

  In this study, the following proposition, which is an immediate corollary of \cite[Theorem~3.1]{S2003}, is used to check absolute stability.

  \begin{proposition}\label{prop:ltvd_stability}
    Let $ \{ x_n \}_{n=0}^{\infty} $ is a solution of 
    \begin{equation}\label{eq:ltvd}
        x_{n+k} + \sum_{i=1}^k a_{k-i} (n) x_{n+k-i} = 0, 
    \end{equation}
    where $ a_i \colon \N \to \C \ (i=0,\dots,k-1)$. 
    Suppose that there exists $ a_i \in \C $ such that $ \lim_{n \to \infty } a_i (n) = a_i $ holds $(i=0,\dots,k-1)$, and the roots $ \{ \lambda_i \}_{i=1}^k $ of the polynomial
    $ \lambda^k + \sum_{i=1}^k a_{k-i} \lambda^{k-i}$
    satisfy $ \max_i \abs*{ \lambda_i } < 1 $. 
    Then, $ \lim_{n \to \infty } \abs*{ x_{n} } = 0 $ holds. 
  \end{proposition}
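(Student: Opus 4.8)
The plan is to recast the scalar $k$-th order recurrence \eqref{eq:ltvd} as a first-order linear system driven by a companion matrix, and then to apply the cited result \cite[Theorem~3.1]{S2003}. Once this translation is made, the proposition is essentially a dictionary entry between the scalar and vector viewpoints, together with the elementary principle that a matrix whose spectral radius lies strictly below one acts as a contraction in a suitably adapted norm.

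First I would set $X_n := (x_n, x_{n+1}, \dots, x_{n+k-1})^\trans \in \C^k$ and let $B(n) \in \C^{k \times k}$ be the companion matrix associated with the coefficients, i.e.\ the matrix with ones on the superdiagonal and last row $(-a_0(n), -a_1(n), \dots, -a_{k-1}(n))$. By construction \eqref{eq:ltvd} is equivalent to the one-step system $X_{n+1} = B(n) X_n$, whence $X_n = B(n-1)\cdots B(0)\, X_0$. The convergence hypothesis $a_i(n) \to a_i$ forces $B(n) \to B$ entrywise, where $B$ is the companion matrix built from the limiting coefficients, and the characteristic polynomial of $B$ is exactly $\lambda^k + \sum_{i=1}^k a_{k-i} \lambda^{k-i}$. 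Therefore the eigenvalues of $B$ are precisely the roots $\{\lambda_i\}_{i=1}^k$, and the assumption $\max_i \abs*{\lambda_i} < 1$ says exactly that $\rho(B) < 1$.

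With this reduction in place, the desired conclusion is $\norm*{X_n} \to 0$, which yields $\abs*{x_n} \to 0$ because $\abs*{x_n} \le \norm*{X_n}$. This is precisely the content of \cite[Theorem~3.1]{S2003} applied to $X_{n+1} = B(n) X_n$ with $B(n) \to B$ and $\rho(B) < 1$: the two displayed conditions above match the theorem's hypotheses verbatim, so the statement drops out as an immediate corollary.

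The only genuinely non-trivial point — the part the citation is responsible for — is that mere convergence $B(n) \to B$ with $\rho(B) < 1$ forces the products $B(n-1)\cdots B(0)$ to vanish; note that this would fail for merely bounded (non-convergent) perturbations, so the convergence hypothesis is essential. The mechanism I would invoke for a self-contained argument is that $\rho(B) < 1$ guarantees a vector norm $\norm*{\cdot}_*$ with $\norm*{B}_* \le r < 1$, so by continuity $\norm*{B(n)}_* \le r' < 1$ for all $n \ge N$; the tail product then contracts geometrically while the fixed initial block $B(N-1)\cdots B(0)$ contributes only a bounded constant. I expect this adapted-norm estimate to be the main obstacle in a from-scratch proof, but here it is absorbed entirely into \cite[Theorem~3.1]{S2003}.
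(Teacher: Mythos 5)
Your proposal is correct and takes essentially the same route as the paper: the paper supplies no proof of its own, stating only that the proposition ``is an immediate corollary of \cite[Theorem~3.1]{S2003}'', which is exactly the citation your argument rests on. Your explicit companion-matrix reduction $X_{n+1}=B(n)X_n$ and the adapted-norm contraction sketch (choose a norm with $\|B\|_{*}<1$, so $\|B(n)\|_{*}\le r'<1$ for all large $n$, and the finite initial block contributes only a bounded factor) correctly fill in the translation that the paper leaves implicit.
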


\subsection{Consistency}\label{subsec:consistency}

Consistency is also essential for numerical integration methods and is related to local errors.
When a method exhibits consistency, a sufficiently small step size $h_n$ leads to $x_n \to x(t_n)$.
\begin{definition}{\label{def:consistency}}
  The method~\eqref{def:eq:vlm} is consistent of order $p$ if 
  \begin{equation}\label{eq:cond_consistency}
      q(t_{n+k})+\sum_{j=0}^{k-1} \alpha_{j,n}q(t_{n+j})=h_{n+k-1}\sum_{j=0}^k \beta_{j,n}q'(t_{n+j})
  \end{equation}
  holds for all grids $\{ t_i \}$ and polynomials $q(t)$ of degree $\le p$. 
\end{definition}

An optimization algorithm in the form of VLMs can be regarded as a numerical integration method, 
if it satisfies consistency and zero-stability, 
which are sufficient conditions for convergence (cf.~\cite[Theorem~5.8]{HNW1987}). 

\section{Interpretation of the NAG-c as a VLM}
\label{sec:nag-vlm}

This section demonstrates that NAG-c can be regarded as a stable and consistent VLM. 
Surprisingly, even when the step size $h_n$ increases linearly, the VLM corresponding to NAG-c is absolutely stable. 
This observation provides an intuitive understanding of the accelerated convergence rate $ O (1/n^2) $ of NAG-c. 

We consider the two-step VLM
\begin{equation}\label{def:eq:vlm-nag}
\begin{aligned}
  &x_{n+2}-(5-3w_{n+1})x_{n+1}+(4-3w_{n+1})x_n\\
  &\quad =h_{n+1}\left(\frac{w_{n+1}-1}{w_{n+1}}\right)\left((20-12w_{n+1})g(x_{n+1})-(16-12w_{n+1})g(x_{n})\right).
\end{aligned}
\end{equation}
The application of this method to the gradient flow~\eqref{eq:gf} is identical to the NAG-c method~\eqref{eq:NAG_multistep} when the step size satisfies $h_n = \frac{s}{4}(n+4)$. 

\subsection{Consistency and zero-stability}

The following theorems show the consistency and zero-stability of \eqref{def:eq:vlm-nag}. 

\begin{theorem}{\label{thm:consistency of vlm}}
  The method~\eqref{def:eq:vlm-nag} is consistent of order 1. 
\end{theorem}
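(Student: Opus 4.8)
The plan is to verify \cref{def:consistency} directly for $p=1$, that is, to check that \eqref{eq:cond_consistency} holds for the two-step method \eqref{def:eq:vlm-nag} whenever $q(t)$ is a polynomial of degree at most $1$. Since \eqref{eq:cond_consistency} is linear in $q$, it suffices to verify it on a basis of such polynomials, and the natural choice is $q(t)=1$ and $q(t)=t$. Reading off the coefficients of \eqref{def:eq:vlm-nag}, we have $\alpha_{1,n}=-(5-3w_{n+1})$, $\alpha_{0,n}=4-3w_{n+1}$, together with $\beta_{1,n}=\tfrac{w_{n+1}-1}{w_{n+1}}(20-12w_{n+1})$ and $\beta_{0,n}=-\tfrac{w_{n+1}-1}{w_{n+1}}(16-12w_{n+1})$, and $\beta_{2,n}=0$ since the method is explicit.

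First I would test $q(t)=1$, which is the consistency-of-order-$0$ condition. Here $q'\equiv 0$, so the right-hand side of \eqref{eq:cond_consistency} vanishes, and the left-hand side reduces to $1+\alpha_{1,n}+\alpha_{0,n}=1-(5-3w_{n+1})+(4-3w_{n+1})$, which collapses to $0$; this confirms order $0$ and also matches the hypothesis used in \cref{lem:zero-stability of vlm}. Next I would test $q(t)=t$. To make the computation transparent I would exploit the translation invariance of \eqref{eq:cond_consistency}: because the order-$0$ relation already holds, the left-hand side is unchanged if we replace $t_{n+j}$ by $t_{n+j}-t_n$, so I may set $t_n=0$, $t_{n+1}=h_n$ and $t_{n+2}=h_n+h_{n+1}=h_n(1+w_{n+1})$. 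The left-hand side then becomes $h_n(1+w_{n+1})+\alpha_{1,n}h_n$, while the right-hand side is $h_{n+1}(\beta_{1,n}+\beta_{0,n})$ with $h_{n+1}=w_{n+1}h_n$ and $q'\equiv 1$.

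I would then simplify both sides as functions of $w_{n+1}$ and $h_n$ and check they agree. The factor $h_n$ cancels throughout, reducing everything to a polynomial identity in $w_{n+1}$: the left-hand side gives $(1+w_{n+1})-(5-3w_{n+1})=4w_{n+1}-4$, and the right-hand side gives $w_{n+1}\cdot\tfrac{w_{n+1}-1}{w_{n+1}}\bigl((20-12w_{n+1})-(16-12w_{n+1})\bigr)=(w_{n+1}-1)\cdot 4=4w_{n+1}-4$, so the two sides coincide and order $1$ is established. I do not expect any genuine obstacle here; the content is a short algebraic verification and the only point requiring care is bookkeeping the $\tfrac{w_{n+1}-1}{w_{n+1}}$ prefactor and the ratio $h_{n+1}/h_n=w_{n+1}$ correctly so that the $w_{n+1}$ in the denominator cancels cleanly. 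For completeness I would remark that the method is \emph{not} consistent of order $2$ (one would test $q(t)=t^2$ and find a mismatch), which is consistent with the first-order claim in the statement.
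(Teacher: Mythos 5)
Your proposal is correct and follows essentially the same route as the paper: both verify \eqref{eq:cond_consistency} directly on a basis of polynomials of degree at most $1$, with the paper choosing $q(t)=t-t_{n+1}$ where you choose $q(t)=t$ normalized to $t_n=0$, a difference that only trivially reshuffles the algebra. Your explicit justification of the reduction to a basis (linearity) and of the translation (via the already-verified order-$0$ identity) is sound, and the final identity $4h_n(w_{n+1}-1)$ on both sides matches the paper's $4h_{n+1}\frac{w_{n+1}-1}{w_{n+1}}$ after dividing by $w_{n+1}=h_{n+1}/h_n$.
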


\begin{proof}
    In view of \cref{def:consistency}, it is sufficient to confirm \eqref{eq:cond_consistency} for $ q (t) = 1 $ and $ q (t) = t - t_{n+1} $. 
    When $q(t)= 1$,  
    \[
        1-(5-3w_{n+1})+4-3w_{n+1}=0
    \] 
    holds. 
    When $q(t)=t - t_{n+1}$, the left-hand side of \eqref{eq:cond_consistency} becomes
    \begin{align}
        & q ( t_{n+2} ) - (5-3w_{n+1}) q (t_{n+1}) + (4-3w_{n+1}) q (t_n) \\
        &\quad = h_{n+1} - (4-3w_{n+1}) h_n =h_{n+1}\left(1-\frac{4-3w_{n+1}}{w_{n+1}}\right)=4h_{n+1}\frac{w_{n+1}-1}{w_{n+1}}
    \end{align}
    and the right-hand side of \eqref{eq:cond_consistency} can be simplified into
    \[
        h_{n+1}\frac{w_{n+1}-1}{w_{n+1}}\left((20-12w_{n+1})-(16-12w_{n+1})\right)
        =4h_{n+1}\frac{w_{n+1}-1}{w_{n+1}}. 
    \]
    Therefore, this method is consistent of order 1.
\end{proof}

\begin{theorem}\label{thm:zero-stability of vlm}
    The method~\eqref{def:eq:vlm-nag} is zero-stable if and only if the step size ratio $ w_n $ satisfies 
    \begin{itemize}
        \item[(i)] there exists $M_1 > 0 $, for all $n,l \in \Z_{\ge 0} $, $ \abs*{ \prod_{ i = n }^{ n + l } \paren*{ 3 w_{i+1} - 4 } } \le M_1 $, and
        \item[(ii)] there exists $M_2 > 0 $, for all $n,l \in \Z_{\ge 0} $, $ \abs*{ \sum_{ j = n }^{ n + l } \prod_{ i = n }^{ j - 1 } \paren*{ 3 w_{i+1} - 4 } } \le M_2 $. 
    \end{itemize}
    In particular, the method~\eqref{def:eq:vlm-nag} is zero-stable if the step sizes are chosen as follows:
    \begin{itemize}
        \item exponentially growing step sizes: $ h_n = h_0 \gamma^n $, where $ \gamma \in (1,5/3) $, or 
        \item linearly growing step sizes: $ h_n = h_0 + n \gamma $, where $ \gamma \in (0, \infty) $.
    \end{itemize}
\end{theorem}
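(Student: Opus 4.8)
The plan is to read the characterization straight off the zero-stability criterion of \cref{lem:zero-stability of vlm}, and then to verify the two displayed step-size regimes by explicit computation. Since \cref{thm:consistency of vlm} already shows that \eqref{def:eq:vlm-nag} is consistent of order $1\ge 0$, \cref{lem:zero-stability of vlm} applies with $k=2$, so the reduced companion matrix $A_n^*$ is $(k-1)\times(k-1)=1\times1$, i.e.\ a scalar. From the coefficients $\alpha_{1,n}=-(5-3w_{n+1})$ and $\alpha_{0,n}=4-3w_{n+1}$ of \eqref{def:eq:vlm-nag}, the defining relation $\alpha^*_{0,n}=1+\alpha_{1,n}$ gives $\alpha^*_{0,n}=3w_{n+1}-4$, and in this one-dimensional setting the matrix products and the vector $e_{k-1}$ are trivial, so conditions (i) and (ii) of \cref{lem:zero-stability of vlm} are exactly conditions (i) and (ii) of the theorem. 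This establishes the equivalence, and it remains only to verify (i) and (ii) for the two prescribed step-size sequences.

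For exponentially growing step sizes $h_n=h_0\gamma^n$ the ratio is constant, $w_{n+1}=\gamma$, so $3w_{n+1}-4=3\gamma-4$ is a fixed number $\rho$, and $\gamma\in(1,5/3)$ is precisely the condition $\abs{\rho}<1$. Then $\prod_{i=n}^{n+l}(3w_{i+1}-4)=\rho^{\,l+1}$ has modulus at most $1$, giving (i) with $M_1=1$, while $\sum_{j=n}^{n+l}\prod_{i=n}^{j-1}(3w_{i+1}-4)=\sum_{m=0}^{l}\rho^{\,m}$ is a partial geometric sum bounded by $1/(1-\abs{\rho})$, giving (ii) with $M_2=1/(1-\abs{\rho})$. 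This regime is routine.

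For linearly growing step sizes $h_n=h_0+n\gamma$ the first step I would take is to record the telescoping identity
\[
  3w_{n+1}-4=\frac{3h_{n+1}-4h_n}{h_n}=-\frac{h_{n-3}}{h_n},
\]
which holds because $3h_{n+1}-4h_n=-(h_0+(n-3)\gamma)=-h_{n-3}$. It follows that
\[
  \prod_{i=n}^{j-1}(3w_{i+1}-4)=(-1)^{j-n}\prod_{i=n}^{j-1}\frac{h_{i-3}}{h_i}
  =(-1)^{j-n}\,\frac{h_{n-3}h_{n-2}h_{n-1}}{h_{j-3}h_{j-2}h_{j-1}},
\]
whose modulus is at most $1$ since $(h_m)$ is increasing; this yields (i) with $M_1=1$. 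Inserting the same expression into the sum gives
\[
  \sum_{j=n}^{n+l}\prod_{i=n}^{j-1}(3w_{i+1}-4)
  =h_{n-3}h_{n-2}h_{n-1}\sum_{j=n}^{n+l}\frac{(-1)^{j-n}}{h_{j-3}h_{j-2}h_{j-1}},
\]
and I would finish by the alternating-series estimate: the weights $1/(h_{j-3}h_{j-2}h_{j-1})$ are positive and strictly decreasing in $j$, so every partial sum of the alternating series is dominated by its first term $1/(h_{n-3}h_{n-2}h_{n-1})$, whence the whole expression is bounded by $1$. This gives (ii) with $M_2=1$.

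The hard part is precisely this last estimate, and it is where the structure of the problem really enters. In the linear regime the ratios satisfy $w_{n+1}\to1$, so $3w_{n+1}-4\to-1$ and the products decay only polynomially (like $(n/j)^3$) rather than geometrically; a naive term-by-term bound on $\sum_{j=n}^{n+l}\abs*{\prod_{i=n}^{j-1}(3w_{i+1}-4)}$ grows like $n$ and fails to yield a uniform constant. Zero-stability is rescued solely by the sign factor $(-1)^{j-n}$, and the real content of the argument is to make this cancellation uniform in the starting index $n$. The telescoping identity is exactly the device that converts the sum into a genuine alternating series with a first-term bound independent of $n$; isolating and exploiting this cancellation is the crux, whereas the consistency input and the exponential case are comparatively mechanical.
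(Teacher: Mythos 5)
Your proposal follows the same route as the paper's proof: the reduction to the scalar condition via \cref{lem:zero-stability of vlm} and the exponential case are identical, and in the linear case you use the same telescoping identity ($3w_{i+1}-4=-h_{i-3}/h_i$, hence $\prod_{i=n}^{j-1}(3w_{i+1}-4)=(-1)^{j-n}\,h_{n-3}h_{n-2}h_{n-1}/(h_{j-3}h_{j-2}h_{j-1})$) that the paper writes with $\hat{\gamma}=h_0/\gamma$. You diverge only in the final estimate: the paper invokes the technical \cref{lem:tech1} (pairing consecutive terms plus an integral comparison), whereas you finish with the Leibniz alternating-series bound, which, where it applies, is genuinely shorter and cleaner.

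However, as stated your estimate has a gap: the positivity and strict monotonicity of the weights $1/(h_{j-3}h_{j-2}h_{j-1})$, the alternation of signs, and your claimed constants $M_1=M_2=1$ all fail for the starting indices $n\in\{0,1,2\}$, because the extrapolated quantities $h_{-1},h_{-2},h_{-3}$ need not be positive. Concretely, take $h_0=\gamma=1$, so $h_m=1+m$: then $3w_1-4=-h_{-3}/h_0=2$, so already the $(n,l)=(0,0)$ product has modulus $2>1$; and for $n=0$ the summands in condition (ii) are $1,2,1,0,0,\dots$ (all nonnegative, not alternating), with partial sums reaching $4>1$. Since conditions (i) and (ii) quantify over all $n,l\in\Z_{\ge 0}$, these cases cannot be skipped. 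The repair is routine: your argument is valid verbatim for $n\ge 3$, and for $n\in\{0,1,2\}$ one splits off the at most three initial terms and factors, whose contribution is bounded by a constant depending only on $h_0/\gamma$, then applies your bound to the tail starting at $j=3$. This is exactly the case distinction ($n\le 3$ versus $n>3$) that occupies the first half of the proof of \cref{lem:tech1}, and the reason the paper's condition-(i) argument is phrased as holding only ``for sufficiently large $n$''; once you add that patch, your proof is a correct and arguably tidier alternative to the paper's.
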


\begin{proof}
    Since the method~\eqref{def:eq:vlm-nag} is a two-step method, 
    $
    A_n^* = \alpha_{0,n}^* = -\alpha_{0,n} = - (4-3w_{n+1}).
    $
    holds. 
    Therefore, \cref{lem:zero-stability of vlm} shows the first half of the theorem. 

    When the step sizes are chosen as $ h_n = h_0 \gamma^n $ with a fixed constant $ \gamma \in (1,5/3) $, 
    $ w_{n+1} = \gamma $ holds. 
    Since this implies that $ \abs*{ 4 - 3 w_{n+1} } = \abs*{ 4 - 3 \gamma } < 1 $ holds for all $n \in \N$, the method~\eqref{def:eq:vlm-nag} is zero-stable: 
    \begin{align}
        \abs*{ \prod_{ i = n }^{ n + l } \paren*{ 3 w_{i+1} - 4 } } &= \abs*{4 - 3 \gamma}^{l+1} < 1, \\
        \abs*{ \sum_{ j = n }^{ n + l } \prod_{ i = n }^{ j - 1 } \paren*{ 3 w_{i+1} - 4 } } &\le \sum_{ j = n }^{ n + l } \abs*{4 - 3 \gamma}^{j-n} < \frac{1}{1- \abs*{4-3\gamma}}.
    \end{align}

    When the step sizes are chosen as $ h_n = h_0 + n \gamma $ with a fixed constant $ \gamma \in (0,\infty) $, $ w_{n+1} = 1 + \frac{1 }{n + h_0/\gamma} $ and $ 3 w_{n+1} - 4 = - 1 + \frac{3}{n + h_0 / \gamma} $ hold. 
    Then, since $ \abs*{ 3 w_{n+1} - 4 } \le 1 $ holds for sufficiently large $n$, the condition (i) holds. 
    To show the condition (ii), we introduce $ \hat{\gamma} = h_0 / \gamma $. 
    Then, we have $ 3 w_{n+1} - 4 = - \frac{ n - 3 +\hat{\gamma} }{n + \hat{\gamma}} $ and 
    \begin{align}
        \sum_{ j = n }^{ n + l } \prod_{ i = n }^{ j - 1 } \paren*{ 3 w_{i+1} - 4 }
        &= \sum_{ j = n }^{ n + l } \paren*{ (-1)^{ j-1-n } \prod_{ i = n }^{ j - 1 } \frac{ i - 3 + \hat{\gamma}  }{i + \hat{\gamma}} } \\
        &= \sum_{ j = n }^{ n + l } (-1)^{ j-1-n } \frac{ \paren*{n - 3 + \hat{\gamma}} \paren*{n - 2 + \hat{\gamma}} \paren*{n - 1 + \hat{\gamma}} }{ \paren*{j - 3 + \hat{\gamma}} \paren*{j - 2 + \hat{\gamma}} \paren*{j - 1 + \hat{\gamma}} }, 
    \end{align}
    which is bounded due to \cref{lem:tech1}
\end{proof}

When the step size satisfies $h_n = \frac{s}{4}(n+4)$ (i.e., when $w_{n} = \frac{n+4}{n+3}$ holds), 
the zero-stability holds based on this theorem. 
Therefore, it is reasonable to regard NAG-c as a numerical integration method for the gradient flow~\eqref{eq:gf}. 

The accelerated phenomenon can be intuitively understood. 
Recall that the solution of the gradient flow~\eqref{eq:gf} satisfies $ f(x(t)) - f (x^{\ast}) = \Order{\frac{1}{t}} $. 
Moreover, the step size $ h_n = \frac{s}{4}(n+4) $ that corresponds to NAG-c implies that
\begin{align}
  t_n = \sum_{j=0}^{n-1} t_{j+1}-t_j = \sum_{j=0}^{n-1} h_j = \sum_{j=0}^{n-1} \frac{s}{4}(j+4) = \Order{n^2}.
\end{align}
In summary, we obtain an accelerated convergence rate of $\Order{\frac{1}{n^2}}$. 

However, in view of \cref{thm:zero-stability of vlm}, 
the step size can grow exponentially, which violates the well-known lower bound $ \Order{\frac{1}{n^2}} $ of first-order optimization methods for convex functions~\cite{D17}. 
This contradiction is caused by considering only zero-stability. 
As shown in the next section, the step size can increase only linearly if absolute stability is considered.

\subsection{Absolute stability}{\label{subsec:stable region}}
  
  In this section, we derive the stability region for the VLM~\eqref{def:eq:vlm-nag} by using \cref{prop:ltvd_stability}. 
  To this end, we apply the method~\eqref{def:eq:vlm-nag} to Dahlquist's test equation $ \dot{x} = \lambda x $, and obtain
\begin{equation}
    \begin{aligned}
    &x_{n+2}-(5-3w_{n+1})x_{n+1}+(4-3w_{n+1})x_n\\
    &\quad = \lambda h_{n+1}\left(\frac{w_{n+1}-1}{w_{n+1}}\right)\left((20-12w_{n+1}) x_{n+1}-(16-12w_{n+1}) x_{n}\right).
    \end{aligned}
\end{equation}
Since $h_{n+1}(w_{n+1}-1)/w_{n+1} = h_{n+1}-h_n$ holds, 
the assumption of \cref{prop:ltvd_stability}, the convergence of coefficients, is ensured for at most linearly growing step sizes (note that $h_{n+1} - h_n $ diverges when the step size grows exponentially). 
Therefore, we introduce $\mu_n \coloneqq \lambda (h_{n+1} - h_{n})$ to handle linearly increasing step sizes and obtain 
\begin{equation}\label{eq:vlm-nag_test}
    x_{n+2} - \paren*{ 5 - 3 w_{n+1} } \paren*{ 1 + 4 \mu_n } x_{n+1} + \paren*{ 4 - 3 w_{n+1} } \paren*{ 1 + 4 \mu_n } x_{n} = 0.
\end{equation}

For later use, we define the absolute stability for the general case. 
To this end, we apply the method~\eqref{def:eq:vlm} (with $k=2$ as an example) to Dahlquist's test equation, and obtain
  \begin{equation}\label{eq:vlm_test}
        x_{n+2} + \alpha_{1,n} x_{n+1} + \alpha_{0,n} 
        = \mu_n \frac{w_{n+1}}{w_{n+1}-1}( \beta_{2,n}x_{n+2} + \beta_{1,n}x_{n+1} + \beta_{0,n}x_n ). 
  \end{equation}
  Consequently, we use \cref{prop:ltvd_stability} to define the absolute stability as follows. 

  \begin{definition}\label{def:vlm_absolute_stability}
    VLM~\eqref{def:eq:vlm} is absolutely stable,  
    if $ \mu_n $, $ \alpha_{j,n} $ and $ \frac{w_{n+1}}{w_{n+1}-1} \beta_{j,n} $ converges to $ \mu $, $ \alpha_j $ and $ \widehat{\beta}_j $ as $n \to \infty $, respectively, and
    the roots of the polynomial
    \begin{align}\label{def:characteristic polynomial}
        R (z)
        &=\left(1-\mu \widehat{\beta}_k \right) z^k + \sum_{j=0}^{k-1} \paren*{ \alpha_j - \mu \widehat{\beta}_j } z^{j} 
    \end{align}
    lie in the unit circle. 
  \end{definition}

  \begin{remark}
    \Cref{def:vlm_absolute_stability} is a natural extension of the usual definition of absolute stability for the fixed step size LMs to the case of linearly increasing step sizes. 
    As well as the fixed step size cases, the boundary of the stability region can be written by using the root locus curve (see, e.g., \cite[V.1]{HW1996}). 
    However, this definition has two minor flaws. 
    First, this definition does not determine stability over the boundary.
    For practical purposes, however, whether the boundary is stable or unstable is not important (except for the origin, which corresponds to the zero-stability). 
    Second, the condition considered here is a sufficient condition for the numerical solution of VLM~\eqref{def:eq:vlm} applied to Dahlquist's test equation to not diverge, but it is not necessarily a necessary condition. 

    \Cref{prop:ltvd_stability} can also be used to analyze the absolute stability of typical VLMs such as the Adams method (see \cref{app:adams} for details on the two-step explicit Adams method). 
  \end{remark}

    \begin{figure}[htp]
    \centering
        \begin{tikzpicture}
            \begin{axis}[width = 10cm,height=5cm,compat=newest,xlabel={$\mathrm{Re}$},ylabel={$\mathrm{Im}$},axis equal,axis lines = middle, axis line style = {->},yticklabels={},xmin = -0.35,xmax=0.05,ymin=-0.11,ymax=0.11,xtick = {-0.3333,-0.25,0},xticklabels={$-\frac{1}{3}$,$-\frac{1}{4}$,},ytick={-0.1,0,0.1},yticklabels={$-0.1$,,$0.1$}]
            \addplot[mark = none,black] table {data/vlmnagsr.dat};
            \end{axis}
        \end{tikzpicture}
        \caption{Stability region boundary for the VLM~\eqref{def:eq:vlm-nag} provided by the condition in \cref{thm:vlm stable region}.}
        \label{fig:stable_region_z}
    \end{figure}
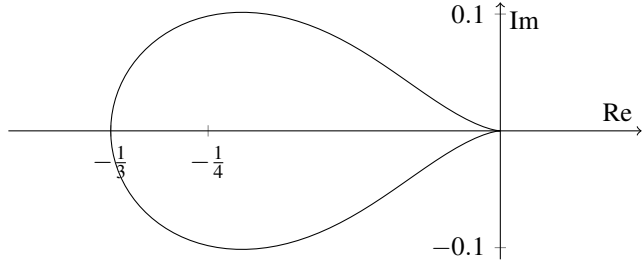

  Based on this definition of stability, we can plot the stability region for the VLM (see \cref{fig:stable_region_z} for the method~\eqref{def:eq:vlm-nag}). 
  After the asymptotic behavior of step size $h_n$ is determined, the absolute stability of the method~\eqref{def:eq:vlm-nag} can be confirmed by verifying that $\mu$ is contained in the stability region. 
  From the stability region, we demonstrate that the method~\eqref{def:eq:vlm-nag} with a linearly growing step size is absolutely stable.
  
  \begin{theorem}{\label{thm:vlm stable region}}
        The method~\eqref{def:eq:vlm-nag} is absolutely stable if and only if $\{ h_n \}_{n=0}^{\infty}$ satisfies the following conditions{\em :}
        \begin{itemize}
            \item The sequence $ \{ \mu_n \}_{n=0}^{\infty} $ converges as $n\to \infty $ (the limit is denoted by $\mu$){\em ;}
            \item The roots of polynomial $ z^2 - 2 (1 + 4\mu) z + (1+4\mu) $ lie in the unit circle. 
        \end{itemize}
        In particular, when $\mu \in \R $, the second condition is equivalent to $ \mu \in \paren*{ - \frac{1}{3} , 0 } $. 
  \end{theorem}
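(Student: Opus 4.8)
The strategy is to unfold the definition of absolute stability (\cref{def:vlm_absolute_stability}) for the specific method \eqref{def:eq:vlm-nag} and to show that the step-size ratio effectively drops out of the characteristic polynomial. Applying the method to $\dot x = \lambda x$ has already produced the recurrence \eqref{eq:vlm-nag_test}, which is of the form \eqref{eq:ltvd} with coefficients $-(5-3w_{n+1})(1+4\mu_n)$ and $(4-3w_{n+1})(1+4\mu_n)$; by \cref{prop:ltvd_stability} this is exactly the setting in which \cref{def:vlm_absolute_stability} applies. First I would read off $\alpha_1 = -(5-3w)$, $\alpha_0 = 4-3w$, $\widehat\beta_2 = 0$, $\widehat\beta_1 = 20-12w$, $\widehat\beta_0 = -(16-12w)$, observe that every coefficient converges if and only if both $\mu_n\to\mu$ and $w_{n+1}\to w$ for some limits, and then simplify the characteristic polynomial of \cref{def:vlm_absolute_stability} (using $20-12w=4(5-3w)$ and $16-12w=4(4-3w)$) to
\[
  R(z) = z^2 - (5-3w)(1+4\mu)\,z + (4-3w)(1+4\mu).
\]
Thus the method is absolutely stable precisely when $\mu_n$ and $w_{n+1}$ converge and both roots of $R$ lie strictly inside the unit circle.

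The heart of the argument, and the step I expect to be the main obstacle, is to show that absolute stability forces $w=1$, so that the free parameter disappears and $R$ collapses to $z^2 - 2(1+4\mu)z + (1+4\mu)$. Here I would exploit the identity $\mu_n = \lambda(h_{n+1}-h_n)$ together with $\Re\lambda<0$ (so $\lambda\neq 0$) and the fact that the step sizes $h_n$ are real and positive. If $w>1$, the step sizes grow geometrically, so $h_{n+1}-h_n\to\infty$ and $\mu_n$ diverges, contradicting its convergence. If $w<1$, then $w_{n+1}\to w$ forces $h_n\to 0$ geometrically, whence $h_{n+1}-h_n = h_n(w_{n+1}-1)\to 0$ and $\mu=0$; but then $R(z)=z^2-(5-3w)z+(4-3w)$ has constant term $4-3w>1$, so the product of its roots exceeds one in modulus and a root escapes the unit disk. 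Both cases are incompatible with \cref{def:vlm_absolute_stability}, leaving $w=1$ and establishing the ``only if'' direction (the convergence of $\mu_n$ being immediate from the definition). Conversely, if $\mu_n\to\mu$ and the roots of $z^2-2(1+4\mu)z+(1+4\mu)$ lie inside the unit circle, then $\mu\neq 0$ (since $\mu=0$ gives $(z-1)^2$, with a root on the boundary); hence $h_{n+1}-h_n\to\mu/\lambda$ is a nonzero real number, which must be positive by positivity of $h_n$, so $h_n$ grows linearly and $w_{n+1}\to 1$ automatically. This recovers all convergence hypotheses of \cref{def:vlm_absolute_stability} and closes the equivalence.

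It then remains to translate ``roots inside the unit circle'' into the explicit interval for real $\mu$. For the monic real quadratic $P(z)=z^2+a_1 z+a_0$ with $a_1=-2(1+4\mu)$ and $a_0=1+4\mu$, I would invoke the Schur--Cohn (Jury) criterion: both roots lie strictly inside the unit circle if and only if $P(1)>0$, $P(-1)>0$, and $\abs*{a_0}<1$. A direct computation gives $P(1)=-4\mu$, $P(-1)=4+12\mu$, and $\abs*{a_0}=\abs*{1+4\mu}$, so these three conditions read $\mu<0$, $\mu>-\tfrac13$, and $-\tfrac12<\mu<0$, respectively. Intersecting, the binding constraints are the first two, and I would conclude $\mu\in\paren*{-\tfrac13,0}$, exactly as claimed. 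This last part is routine; the only care needed is to verify that the endpoints $\mu=-\tfrac13$ and $\mu=0$, which place a root precisely on the unit circle (at $z=-1$ and $z=1$ respectively), are correctly excluded by the open interval.
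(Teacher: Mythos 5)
Your proposal is correct, and its skeleton is the same as the paper's: unfold \cref{def:vlm_absolute_stability} for the method \eqref{def:eq:vlm-nag}, use $20-12w=4(5-3w)$ and $16-12w=4(4-3w)$ to collapse the polynomial \eqref{def:characteristic polynomial} to $z^2-2(1+4\mu)z+(1+4\mu)$, and then convert the root condition into $\mu\in\paren*{-\frac{1}{3},0}$ by a quadratic root-location test (the paper uses the criterion of \cref{lem:the roots of quadratic equation}, i.e.\ $\abs*{1+4\mu}<1$ and $\abs*{1+4\mu}<1+2\mu$; your Jury test gives $\mu<0$, $\mu>-\frac{1}{3}$, $\abs*{1+4\mu}<1$ --- the same interval).

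Where you differ is precisely the step you flagged as the heart of the argument, and there you are more careful than the paper. The paper's entire treatment of that step is the one-line assertion that $\mu_n\to\mu$ implies $w_n\to 1$. Taken literally, that implication is false: for geometrically decaying steps $h_n=\gamma^n$ with $0<\gamma<1$, one has $\mu_n=\lambda\gamma^n(\gamma-1)\to 0$ while $w_n\equiv\gamma\neq 1$. The theorem's equivalence nevertheless survives, because in that regime $\mu=0$ and both sides fail ($z=1$ is a root of $z^2-(5-3w)z+(4-3w)$ for every $w$, and of $(z-1)^2$), but the paper never addresses this. Your two-case argument --- $w>1$ makes $\mu_n$ diverge; $w<1$ forces $\mu=0$ and then a root on or outside the unit circle --- together with your converse observation that the root condition excludes $\mu=0$, hence forces $h_{n+1}-h_n$ to a positive limit, linear growth of $h_n$, and $w_{n+1}\to 1$, is exactly what is needed to make the equivalence airtight. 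In short, you follow the paper's route but close a genuine (if benign) gap in the paper's own proof.
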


\begin{proof}
Note that, when $ \lim_{n \to \infty } \mu_n = \mu $ holds, $ \lim_{n \to \infty} w_n = 1 $ is satisfied. 
Then, we have
\begin{align}
    \lim_{n \to \infty} \alpha_{1,n} &= -2,&
    \lim_{n \to \infty} \alpha_{0,n} &= 1,\\
    \lim_{n \to \infty} \frac{w_{n+1}}{w_{n+1}-1} \beta_{1,n} &= 8,&
    \lim_{n \to \infty} \frac{w_{n+1}}{w_{n+1}-1} \beta_{0,n} &= -4. 
\end{align}
Therefore, in this case, the polynomial~\eqref{def:characteristic polynomial} is $ z^2 - 2 ( 1 + 4\mu) + (1+4\mu) $, 
which proves the first half of the theorem. 
To prove the second half, we use \cref{def:characteristic polynomial} and obtain the necessary and sufficient condition ``$ \abs*{1 + 4\mu} < 1$ and  $\abs*{1 + 4\mu} < 1 + 2\mu $''.
This is equivalent to $ -\frac{1}{3} < \mu < 0 $. 
\end{proof}

\section{Optimality of NAG-c within a natural class of VLMs}
\label{sec:optimality}

    We deal with the general form of two-step VLMs,
    \begin{equation}{\label{def:general-vlm}}
        x_{n+2}+a_{n}x_{n+1}+b_{n}x_n=h_{n+1}\left(c_ng(x_{n+1})-d_ng(x_n)\right), 
    \end{equation}
    and present a sufficient condition to prove its convergence rate using the so-called Lyapunov argument. 
    Moreover, we prove the optimality of NAG-c among a natural class of methods. 

\subsection{Consistency and stability}{\label{sec:ext:consistency}}

    We assume that the VLM~\eqref{def:general-vlm} is consistent of order $1$, zero-stable, and absolutely stable. 
    The consistency is expressed as
    \begin{align}{\label{eq:order of 0}}
        a_n &= - b_n - 1 ,\\
        {\label{eq:order of 1}}
        d_{n} &= c_n+\frac{b_n}{w_{n+1}}-1.  
    \end{align}
    Under this assumption, two free parameters $b_n$ and $ c_n$ are required to determine the method~\eqref{def:general-vlm}. 
    In addition, similar to the proof of \cref{thm:zero-stability of vlm}, 
    the method~\eqref{def:general-vlm} is zero-stable if and only if the following two conditions are satisfied:
    \begin{itemize}
        \item[(i)] there exists $M_1 > 0 $, for all $n,l \in \Z_{\ge 0} $, $ \abs*{ \prod_{ i = n }^{ n + l } b_i } \le M_1 $, and
        \item[(ii)] there exists $M_2 > 0 $, for all $n,l \in \Z_{\ge 0} $, $ \abs*{ \sum_{ j = n }^{ n + l } \prod_{ i = n }^{ j - 1 } ( - b_i ) } \le M_2 $. 
    \end{itemize}
    Furthermore, in view of \cref{def:vlm_absolute_stability}, we assume 
    \begin{align}
        \lim_{n \to \infty} b_n &= b,&
        \lim_{n \to \infty} \frac{w_{n+1}}{w_{n+1}-1} c_n &= \widehat{c},&
        \lim_{n \to \infty} \frac{w_{n+1}}{w_{n+1}-1} d_n &= \widehat{d}.
    \end{align}
    Because of the above first condition and \eqref{eq:order of 0}, $ \lim_{n\to \infty} a_n = -b - 1$ holds. 
    Moreover, due to the condition on $d_n$ and \eqref{eq:order of 1}, we have $ b = 1$. 
    Then, the characteristic polynomial~\eqref{def:characteristic polynomial} is
    $ z^2 - \paren*{2+ \widehat{c} \mu } z + 1 + \widehat{d} \mu$, 
    and the roots of this polynomial lie in the unit circle if and only if both of $ \abs*{1 + \widehat{d} \mu} < 1$ and $\abs*{2 + \widehat{c} \mu} < 2 + \widehat{d} \mu$ hold. 
    In summary, regarding the absolute stability, we assume
    \begin{align}
        \lim_{n \to \infty} \frac{b_n-w_{n+1}}{w_{n+1}-1} &= \widehat{d} - \widehat{c},&
        \lim_{n \to \infty} \frac{w_{n+1}}{w_{n+1}-1} c_n &= \widehat{c}, \label{eq:general-vlm-as1}\\
        0 < \widehat{d} &< \widehat{c},&
        - \frac{4}{ \widehat{c} + \widehat{d} } < \mu &< 0. \label{eq:general-vlm-as2}
    \end{align}
    
\subsection{Convergence rate by Lyapunov functions}{\label{subsec:convergence rate}}
    We derive the conditions under which the method~\eqref{def:general-vlm} for the gradient flow~\eqref{eq:gf} has a Lyapunov function that reveals a convergence rate. 
    To simplify this discussion, we introduce a sequence $\{ \xi_n \} $ and a real constant $r$ using the relation
    $ b_n = \frac{\xi_n}{\xi_{n+1} + r}$. 
    This coincides with NAG-c when $\xi_n = n$ and $r = 2$. 
    We define the Lyapunov function for \eqref{def:general-vlm} as follows:
    \begin{equation}{\label{def:Lyapunov function}}
        \begin{split}
            \lyp(n) &=B_n(f(x_n)-f(x^*))\\
            &\quad +\frac{1}{2}\|\xi_n(x_{n+1}-x_n) + h_{n+1}(\xi_{n+1}+r)d_n\nabla f(x_n)+r(x_{n+1}-x^*)\|^2,
        \end{split}
    \end{equation}
    where $ B_n=A_{n+1}\xi_n $ and $ A_{n+1}=h_{n+1}(\xi_{n+1}+r)c_{n}-h_{n+2}(\xi_{n+2}+r)d_{n+1} $. 
    The following lemma provides the conditions for this function to be monotonically non-increasing. 
    
    \begin{lemma}{\label{lem:lyapunov decrease monotonically}}
        Suppose that $a_n=-1-b_n, \ b_n=\frac{\xi_n}{\xi_{n+1}+r}$ and $\xi_n > 0$. 
        If 
        \begin{align}
            {\label{eq:lyapunov decrease monotonically cond1}}
                B_n &\ge 0, \\
            {\label{eq:lyapunov decrease monotonically cond2}}
                B_{n+1}-B_n &\ge 0,\\
            {\label{eq:lyapunov decrease monotonically cond3}}
                B_{n+1}-B_n-rA_{n+1} &\le 0,\\
            {\label{eq:lyapunov decrease monotonically cond4}}
            \frac{A_{n+1}}{2} - \frac{r}{2L} - h_{n+1}(\xi_{n+1}+r)d_n + \frac{A_{n+1}h_{n+1}^2(\xi_{n+1}+r)^2d_n^2L}{2B_n} &\le 0
        \end{align}
        hold, then $\{x_n\}$ that is generated by the method~\eqref{def:general-vlm} for the gradient flow~\eqref{eq:gf} satisfies
        $
            \lyp(n+1)\le \lyp(n)
        $.
    \end{lemma}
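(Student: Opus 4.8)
The plan is to show that the stated hypotheses force the increment $\lyp(n+1)-\lyp(n)$ to split into two groups of terms, one controlled by \eqref{eq:lyapunov decrease monotonically cond3} and one by \eqref{eq:lyapunov decrease monotonically cond4}, with \eqref{eq:lyapunov decrease monotonically cond1} fixing the relevant signs. Write $v_n$ for the vector inside the squared norm in \eqref{def:Lyapunov function} and $\Delta_n \coloneqq f(x_n)-f(x^*)$, so that $\lyp(n) = B_n \Delta_n + \frac12\norm{v_n}^2$. The first and decisive step is a purely algebraic identity: substituting the method \eqref{def:general-vlm} (with $g=-\fgrad$) together with $a_n=-1-b_n$ and $b_n=\xi_n/(\xi_{n+1}+r)$ into $x_{n+2}-x_{n+1}$ and then forming $v_{n+1}$, I expect everything to collapse to the clean recurrence $v_{n+1}=v_n-A_{n+1}\fgrad(x_{n+1})$. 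The key cancellations should be that $(\xi_{n+1}+r)b_n=\xi_n$ reproduces the leading term of $v_n$, that the $\fgrad(x_n)$ coefficient $h_{n+1}(\xi_{n+1}+r)d_n$ is carried over unchanged, and that the $\fgrad(x_{n+1})$ coefficient is exactly $-A_{n+1}$ in view of $A_{n+1}=h_{n+1}(\xi_{n+1}+r)c_n-h_{n+2}(\xi_{n+2}+r)d_{n+1}$.

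Granting this recurrence, $\frac12(\norm{v_{n+1}}^2-\norm{v_n}^2)=-A_{n+1}\pro{v_n}{\fgrad(x_{n+1})}+\frac{A_{n+1}^2}{2}\norm{\fgrad(x_{n+1})}^2$, and expanding $v_n$ produces three inner products against $\xi_n(x_{n+1}-x_n)$, $h_{n+1}(\xi_{n+1}+r)d_n\fgrad(x_n)$, and $r(x_{n+1}-x^*)$. To the first and third I would apply the strengthened inequalities valid for $L$-smooth convex $f$, namely $\pro{\fgrad(x_{n+1})}{x_{n+1}-x_n}\ge \Delta_{n+1}-\Delta_n+\frac{1}{2L}\norm{\fgrad(x_{n+1})-\fgrad(x_n)}^2$ and, using $\fgrad(x^*)=0$, $\pro{\fgrad(x_{n+1})}{x_{n+1}-x^*}\ge \Delta_{n+1}+\frac{1}{2L}\norm{\fgrad(x_{n+1})}^2$. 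Since $A_{n+1}=B_n/\xi_n\ge 0$ by \eqref{eq:lyapunov decrease monotonically cond1} and $\xi_n>0$, multiplying by the nonpositive coefficients $-A_{n+1}\xi_n=-B_n$ and $-A_{n+1}r$ reverses these inequalities and yields, among others, the favourable terms $-\frac{B_n}{2L}\norm{\fgrad(x_{n+1})-\fgrad(x_n)}^2$ and $-\frac{A_{n+1}r}{2L}\norm{\fgrad(x_{n+1})}^2$.

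The function-value bookkeeping then telescopes: the term $B_n(f(x_{n+1})-f(x_n))$ arising from $B_{n+1}\Delta_{n+1}-B_n\Delta_n=(B_{n+1}-B_n)\Delta_{n+1}+B_n(\Delta_{n+1}-\Delta_n)$ is cancelled exactly by the matching contribution of the $x_{n+1}-x_n$ inner product, leaving the single scalar $(B_{n+1}-B_n-rA_{n+1})$ multiplying $\Delta_{n+1}\ge 0$, which is nonpositive by \eqref{eq:lyapunov decrease monotonically cond3}. What remains is a quadratic form in $p=\fgrad(x_n)$ and $q=\fgrad(x_{n+1})$, namely
\[
-\frac{B_n}{2L}\norm{p-q}^2 - A_{n+1}\kappa\pro{p}{q} - \frac{A_{n+1}r}{2L}\norm{q}^2 + \frac{A_{n+1}^2}{2}\norm{q}^2,\qquad \kappa\coloneqq h_{n+1}(\xi_{n+1}+r)d_n.
\]
I would complete the square in the variable $p-q$ (legitimate because $B_n>0$), which annihilates the cross term at the cost of a residual multiple of $\norm{q}^2$; the requirement that this residual coefficient be nonpositive is, after dividing by $A_{n+1}>0$, precisely \eqref{eq:lyapunov decrease monotonically cond4}. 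Hence both groups are $\le 0$ and $\lyp(n+1)\le\lyp(n)$, while \eqref{eq:lyapunov decrease monotonically cond2} is what subsequently guarantees that $\{B_n\}$ is nondecreasing for the convergence-rate estimate. The main obstacle I anticipate is the very first step, verifying that the VLM update genuinely telescopes into $v_{n+1}=v_n-A_{n+1}\fgrad(x_{n+1})$, on which the entire argument hinges; the second delicate point is checking that the completed-square residual matches \eqref{eq:lyapunov decrease monotonically cond4} coefficient-for-coefficient (with $\kappa^2=h_{n+1}^2(\xi_{n+1}+r)^2d_n^2$) rather than merely up to sign.
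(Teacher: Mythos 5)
Your proposal is, in outline, exactly the paper's proof: the collapse $v_{n+1}=v_n-A_{n+1}\nabla f(x_{n+1})$ that you flag as the main obstacle is precisely the paper's identity \eqref{eq:diff of zn} (obtained by multiplying the scheme by $\xi_{n+1}+r$ and using $(\xi_{n+1}+r)b_n=\xi_n$, i.e.\ \eqref{eq:generalLM2_update}), and it does hold; the resulting expansion, the use of \eqref{eq:L-smooth and convex1}--\eqref{eq:L-smooth and convex2}, the cancellation of the $B_n(f(x_{n+1})-f(x_n))$ terms, the quadratic form in $\nabla f(x_n),\nabla f(x_{n+1})$, and the completion of the square whose residual equals $A_{n+1}$ times the left-hand side of \eqref{eq:lyapunov decrease monotonically cond4} all coincide with the paper's computation (and, as you note, \eqref{eq:lyapunov decrease monotonically cond4} presupposes $B_n>0$, so $A_{n+1}=B_n/\xi_n>0$ and the division is legitimate).

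There is, however, one genuine justification gap, in the only place where your grouping differs from the paper's. You bound the $x^*$ term by applying the strengthened convexity inequality to $\langle \nabla f(x_{n+1}),x_{n+1}-x^*\rangle$ and multiplying by $-A_{n+1}r$, which requires $rA_{n+1}\ge 0$; but conditions \eqref{eq:lyapunov decrease monotonically cond1} and $\xi_n>0$, which you cite, only give $A_{n+1}\ge 0$ --- the lemma nowhere assumes $r\ge 0$. The sign you need is nonetheless true, because \eqref{eq:lyapunov decrease monotonically cond2} and \eqref{eq:lyapunov decrease monotonically cond3} together yield $rA_{n+1}\ge B_{n+1}-B_n\ge 0$; consequently your closing remark that \eqref{eq:lyapunov decrease monotonically cond2} plays no role in the monotonicity proof and only matters ``subsequently'' is incorrect for your route. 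The paper avoids this issue by a different bookkeeping: it applies \eqref{eq:L-smooth and convex2} to $(B_{n+1}-B_n)(f(x_{n+1})-f(x^*))$ (this is where \eqref{eq:lyapunov decrease monotonically cond2} enters), so that the inner product $\langle \nabla f(x_{n+1}),x_{n+1}-x^*\rangle$ ends up with the coefficient $B_{n+1}-B_n-rA_{n+1}$, which is nonpositive by \eqref{eq:lyapunov decrease monotonically cond3} and is then combined with $\langle \nabla f(x_{n+1}),x_{n+1}-x^*\rangle\ge \frac{1}{2L}\|\nabla f(x_{n+1})\|^2$; the sign of $r$ itself is never needed, and both groupings produce the identical quadratic form. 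With that one-line repair (or by switching to the paper's grouping), your argument is complete.
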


\begin{proof}
From the $L$-smoothness and convexity of the function, the following inequalities hold for arbitrary $x,y \in \mathbb{R}^d$ (cf.~\cite[Theorem~2.1.5]{N2004}):
\begin{align}
    \label{eq:L-smooth and convex1}
    &\frac{1}{2L}\|\fgrad(x)-\fgrad(y)\|^2\le\frac{1}{L}\|\fgrad(x)-\fgrad(y)\|^2\le \pro{\fgrad(x)-\fgrad(y)}{x-y},\\
    \label{eq:L-smooth and convex2}
    &f(x)\le f(y)+\langle \nabla f(x),x-y\rangle -\frac{1}{2L}\|\nabla f(x)-\nabla f(y)\|^2.
\end{align}

    First, we introduce $z_n$ as follows:
    \[
        z_n=\xi_n(x_{n+1}-x_n)+h_{n+1}(\xi_{n+1}+r)d_n\nabla f(x_n)+r(x_{n+1}-x^*).
    \]
    From the scheme~\eqref{def:general-vlm} and $b_n = \frac{\xi_n}{\xi_{n+1} + r}$, we obtain
    \begin{equation}\label{eq:generalLM2_update}
        (\xi_{n+1}+r)(x_{n+2}-x_{n+1})-\xi_n(x_{n+1}-x_n)=-h_{n+1}(\xi_{n+1}+r)(c_n\nabla f(x_{n+1})-d_n\nabla f(x_{n})).
    \end{equation}
    Then, we observe that
    \begin{align}
        z_{n+1}-z_n 
        &= \xi_{n+1}(x_{n+2}-x_{n+1}) + h_{n+2}(\xi_{n+2}+r)d_{n+1}\nabla f(x_{n+1})+r(x_{n+2}-x^*)\\
        & \qquad -\xi_n(x_{n+1}-x_n) - h_{n+1}(\xi_{n+1}+r)d_n \nabla f(x_n)-r(x_{n+1}-x^*)\\
        & = (\xi_{n+1} + r)(x_{n+2}-x_{n+1}) -\xi_n(x_{n+1}-x_n) - h_{n+1}(\xi_{n+1}+r)d_n\nabla f(x_n)\\
        &\qquad + h_{n+2}(\xi_{n+2}+r)d_{n+1}\nabla f(x_{n+1})\\
        & = -\left(h_{n+1}(\xi_{n+1}+r)c_n-h_{n+2}(\xi_{n+2}+r)d_{n+1} \right)\nabla f(x_{n+1})\\
        & = -A_{n+1}\fgrad(x_{n+1}). \label{eq:diff of zn}
    \end{align}
    The third equality follows from~\eqref{eq:generalLM2_update}.
    We evaluate the difference $ \lyp(n+1)-\lyp(n) $ with~\eqref{eq:diff of zn};
    \begin{align}
        \lyp(n+1)-\lyp(n)
        &=B_n(f(x_{n+1})-f(x_n))+(B_{n+1}-B_n)(f(x_{n+1})-f(x^*))\\
        &\hspace{150pt}+\frac{1}{2}\langle z_{n+1}-z_{n},z_{n+1} -z_{n} + 2z_{n}  \rangle\\
        &=B_n(f(x_{n+1})-f(x_n))+(B_{n+1}-B_n)(f(x_{n+1})-f(x^*))\\
        &\hspace{100pt} - A_{n+1}\langle \fgrad(x_{n+1}),z_{n}\rangle + \frac{A_{n+1}^2}{2}\|\fgrad(x_{n+1}) \|^2.
    \end{align}
    For the most right-hand side of the equation above, we evaluate the first, second, and third terms in order.
    We apply~\eqref{eq:L-smooth and convex2} to the first term.
    Then, from the condition~\eqref{eq:lyapunov decrease monotonically cond1}, we obtain
    \begin{equation}{\label{eq:first term}}    
        B_n (f(x_{n+1})-f(x_n)) \le B_n \langle \nabla f(x_{n+1}), x_{n+1}-x_{n}\rangle -\frac{B_n}{2L}\| \nabla f(x_{n+1}) - \nabla f(x_{n})\|^2.
    \end{equation}
    For the second term, based on~(\ref{eq:lyapunov decrease monotonically cond2}), we obtain 
    \begin{align}{\label{eq:second term}}    
        \notag
        &(B_{n+1}- B_n) (f(x_{n+1})-f(x^*)) \\
        & \qquad \le (B_{n+1}- B_n) \langle \nabla f(x_{n+1}), x_{n+1}-x^*\rangle -\frac{(B_{n+1}- B_n)}{2L}\| \nabla f(x_{n+1})\|^2.
    \end{align}
    For the third term, based on the definition of $z_n$, we obtain
    \begin{align}{\label{eq:third term}}
        \notag
        A_{n+1} \langle \nabla f(x_{n+1}), z_n\rangle
        &= A_{n+1}\xi_n \langle \nabla f(x_{n+1}), x_{n+1} - x_n\rangle\\
        \notag
        &\quad + A_{n+1}h_{n+1}(\xi_{n+1} + r)d_n\langle \nabla f(x_{n+1}),\nabla f(x_{n})\rangle\\
        &\quad + rA_{n+1} \langle \nabla f(x_{n+1}), x_{n+1} - x^*\rangle.
    \end{align}
    By combining all three terms, we obtain 
    \begin{align}
        &\lyp(n+1) - \lyp(n)\\ 
        &\ \le (B_n - A_{n+1}\xi_n)\langle \nabla f(x_{n+1}), x_{n+1} - x_n\rangle \\
        &\quad + (B_{n+1}- B_n -r A_{n+1}) \langle \nabla f(x_{n+1}), x_{n+1}-x^*\rangle -\left( \frac{B_{n+1}}{2L} - \frac{A_{n+1}^2}{2}\right)\|\fgrad (x_{n+1})\|^2 \\
        &\quad + \paren*{ \frac{B_n}{L} - A_{n+1}h_{n+1}(\xi_{n+1}+r)d_n} \langle \fgrad(x_{n+1}),\fgrad(x_n)\rangle
        - \frac{B_n}{2L}\|\fgrad (x_n)\|^2.
    \end{align}
    From the the condition~\eqref{eq:lyapunov decrease monotonically cond3}, inequality~\eqref{eq:L-smooth and convex1}, and definition of $B_n$, 
    \begin{align}
        &\lyp(n+1) - \lyp(n)\\
        &\le -\left( \frac{B_n}{2L}- \frac{A_{n+1}^2}{2} + \frac{r A_{n+1}}{2L}\right)\|\fgrad (x_{n+1})\|^2\\
        &+ \left(  \frac{B_n}{L} - A_{n+1}h_{n+1}(\xi_{n+1}+r)d_n\right)\langle \fgrad(x_{n+1}),\fgrad(x_n)\rangle
        - \frac{B_n}{2L}\|\fgrad (x_n)\|^2\\
        &\le -\frac{B_n}{2L}\left\|\fgrad(x_n)-\left(1-\frac{A_{n+1}h_{n+1}(\xi_{n+1}+r)d_nL}{B_n}\right)\fgrad(x_{n+1})\right\|^2 \\
        &+\left(
        -\frac{B_n}{2L} + \frac{A_{n+1}^2}{2} - \frac{r A_{n+1}}{2L}
            +\frac{B_n}{2L}\left(1-\frac{A_{n+1}h_{n+1}(\xi_{n+1}+r)d_nL}{B_n}\right)^2
        \right)\|\fgrad(x_{n+1})\|^2,
    \end{align}
    holds.
    We now rewrite the coefficient of $\|\fgrad(x_{n+1})\|^2$ to obtain
    \begin{align}
        &-\frac{B_n}{2L} + \frac{A_{n+1}^2}{2} - \frac{r A_{n+1}}{2L}
            +\frac{B_n}{2L}\left(1-\frac{A_{n+1}h_{n+1}(\xi_{n+1}+r)d_nL}{B_n}\right)^2\\
        &\quad = A_{n+1}\left(\frac{A_{n+1}}{2} - \frac{r}{2L} - h_{n+1}(\xi_{n+1}+r)d_n + \frac{A_{n+1}h_{n+1}^2(\xi_{n+1}+r)^2d_n^2L}{2B_n} \right).
    \end{align}
    From the conditions~\eqref{eq:lyapunov decrease monotonically cond1}, \eqref{eq:lyapunov decrease monotonically cond4} and $\xi_n > 0$, the coefficient of $\|\fgrad(x_{n+1})\|^2$
    is negative; therefore,
    $
        \lyp(n+1)-\lyp(n)\le 0
    $
    holds.
\end{proof}

    According to \cref{lem:lyapunov decrease monotonically}, if the conditions \eqref{eq:lyapunov decrease monotonically cond1}, \eqref{eq:lyapunov decrease monotonically cond2}, \eqref{eq:lyapunov decrease monotonically cond3} and \eqref{eq:lyapunov decrease monotonically cond4} are satisfied for all $n$, 
    $
        \lyp(n)\le \lyp(0)
    $
    holds.
    Recall the definition \eqref{def:Lyapunov function}. 
    To evaluate $\lyp(0)$, we select $x_1 = x_0$ as the starting point and $d_0$ = 0 and $\xi_0 = 0$ as the initial parameter.
    These settings are reasonable under consistency conditions \eqref{eq:order of 0} and \eqref{eq:order of 1} because the update formula becomes
    $
        x_2 =x_1 - h_1 \nabla f (x_1).
    $
    This equation is identical to the explicit Euler method with a step size of $h_1$.
    Subsequently, we obtain 
    \begin{equation}
        B_n(f(x_n) - f(x^*)) \le \lyp(n) \le \lyp(0) \le \frac{r^2}{2}\|x_1-x^*\|^2, 
    \end{equation}
    which reveals the convergence rate $ f (x_n) - f(x^*) = \Order{r^2/B_n}$. 

    Conditions~\eqref{eq:lyapunov decrease monotonically cond1} and \eqref{eq:lyapunov decrease monotonically cond2} are natural because they are necessary for the above argument to reveal the convergence rate using the Lyapunov function~$ \lyp $. 
    The remaining conditions~\eqref{eq:lyapunov decrease monotonically cond3} and \eqref{eq:lyapunov decrease monotonically cond4} also appear naturally to prove that the function $\lyp$ is monotonically non-increasing. 

The convergence rate of NAG-c can be shown by \cref{lem:lyapunov decrease monotonically}. In \eqref{def:eq:vlm-nag}, which has a step size $h_n = an + 3a$, the parameters can be written as 
\begin{align}
    \xi_n &= n,&
    r &= 2,&
    c_n &= \frac{8n+12}{(n+3)(n+4)},&
    d_n &= \frac{4n}{(n+3)(n+4)}.
\end{align}
Then, $B_n$ and $A_{n+1} $ are written as $ B_n = 4an(n+2)$ and $ A_{n+1} = 4a(n+2) $, respectively. 
Therefore, the condition of Lemma~\ref{lem:lyapunov decrease monotonically} can be written as follows:
\begin{align}
    4an(n+2)&\ge 0,&
    4a(2n+3)&\ge 0,&
    -4a &\le 0,&
    \frac{\left(4 L a - 1\right) \left(2 L a n + 1\right)}{L} &\le 0.
\end{align}
The above conditions are satisfied for all $n\ge 0$ if $0 \le a \le \frac{1}{4L}$.
Therefore, 
$
    4an(n+2)(f(x_n)-f(x^*) \le \lyp(n) \le \lyp(0)\le 2\|x_1 - x^*\|^2
$
holds.

\subsection{NAG-c optimality}{\label{subsec:optimality}}
    We prove the optimality of the NAG-c among methods in the form of~\eqref{def:general-vlm} when consistency of order $1$, zero-stability, absolute stability, and the convergence rate $O(1/n^2)$ that is ensured by \cref{lem:lyapunov decrease monotonically} are assumed. 

    \Cref{lem:lyapunov decrease monotonically} yields the convergence rate $\Order{r^2/B_n}$. 
    Hence, we consider the case in which $B_n$ becomes $\Theta(n^2)$. 
    In the following, we briefly explain that $B_n$ is $\Order{{\xi_n}^2}$. 
    Therefore, it is sufficient to consider the case $\xi_n = \Order{n} $ when $h_n = \Order{n}$. 
    Based on the definition $ B_n = A_{n+1} \xi_n $, it is sufficient to confirm that the order of $A_n$ is $\Order{\xi_n}$. 
    This is satisfied because $h_{n+1}c_n$ and $h_{n+2}d_{n+1}$ are bounded because of the condition~\eqref{eq:general-vlm-as1}:
    as $n \to \infty$, $ h_{n+1} c_n = \paren*{ h_{n+1} - h_n } \frac{w_{n+1}}{w_{n+1}-1} c_n \to \frac{\mu}{\lambda} \widehat{c} $ and 
    $ h_{n+2} d_n = w_{n+2} \paren*{ h_{n+1} - h_n } \frac{w_{n+1}}{w_{n+1}-1} d_n \to \frac{\mu}{\lambda} \widehat{d} $ hold. 
    
    Let $\xi_n = n$ and $h_n = an + (1+r)a$, where $ a $ is a positive real constant,
    because the $\mathrm{o}(n)$ terms in $\xi_n$ and $h_n$ do not contribute to the dominant term of the convergence rate from the previous argument. 
    Subsequently, we determine the $\mathrm{o}(n)$ terms for convenient calculation. 
    Moreover, considering the relation $ b_n = \frac{\xi_n}{\xi_{n+1} + r}$
    we let the coefficient of $ n $ in $ \xi_n$ be $1$, which has no asymptotic effect.   
    
    \begin{theorem}\label{thm:Nesterov-optim}
        Let the step size $h_n = an + (1+r)a$ and $\xi_n = n$.
        Suppose that the method~\eqref{def:general-vlm} for the gradient flow~\eqref{eq:gf} is consistent of order $1$, zero-stable, and absolutely stable.
        Furthermore, suppose that the method satisfies the convergence rate $\Order{1/n^2}$ ensured by \cref{lem:lyapunov decrease monotonically}. 
        Then, the parameters that minimize the coefficient of $1/n^2$ in $r^2/B_n$ are those for NAG-c.
    \end{theorem}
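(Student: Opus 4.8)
The plan is to reduce the statement to an explicit constrained optimization in the asymptotic data $(r,a,\hat c,\hat d)$ and then solve it in closed form. First I would identify the quantity to be minimized. Since $B_n=A_{n+1}\xi_n$ with $\xi_n=n$, and $A_{n+1}=h_{n+1}(\xi_{n+1}+r)c_n-h_{n+2}(\xi_{n+2}+r)d_{n+1}$, I would substitute $h_n=a(n+1+r)$ together with the absolute-stability limits, which here read $\tfrac{w_{n+1}}{w_{n+1}-1}=n+2+r$, so that $(n+2+r)c_n\to\hat c$ and $(n+2+r)d_n\to\hat d$. This gives $A_{n+1}\sim a(\hat c-\hat d)n$ and hence $B_n\sim a(\hat c-\hat d)n^2$. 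The decisive simplification is that consistency of order $1$ fixes $\hat c-\hat d$: inserting $\xi_n=n$, $b_n=\tfrac{n}{n+1+r}$ into \eqref{eq:order of 1} and using \eqref{eq:general-vlm-as1} yields $\hat d-\hat c=\lim_n\tfrac{b_n-w_{n+1}}{w_{n+1}-1}=-(2+r)$. Therefore $B_n\sim a(2+r)n^2$, and the coefficient of $1/n^2$ in $r^2/B_n$ to be minimized is exactly $J=\tfrac{r^2}{a(2+r)}$.

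Next I would convert the hypotheses into large-$n$ constraints on $(r,a,\hat d)$, writing $\hat c=\hat d+2+r$ throughout. Conditions \eqref{eq:lyapunov decrease monotonically cond1} and \eqref{eq:lyapunov decrease monotonically cond2} hold automatically since $B_n\sim a(2+r)n^2$ is positive and increasing. Condition \eqref{eq:lyapunov decrease monotonically cond3}, with $B_{n+1}-B_n\sim 2a(2+r)n$ and $rA_{n+1}\sim ra(2+r)n$, forces $2-r\le 0$, i.e.\ $r\ge 2$. The real work is \eqref{eq:lyapunov decrease monotonically cond4}: after substituting $B_n=A_{n+1}n$ so its last term collapses to $\tfrac{L}{2n}\paren{h_{n+1}(\xi_{n+1}+r)d_n}^2$, and using $h_{n+1}(\xi_{n+1}+r)d_n\sim a\hat d\,n$, the whole left-hand side is $O(n)$ with leading coefficient $\tfrac{a(2+r)}{2}-a\hat d+\tfrac{a^2\hat d^2L}{2}$; requiring this to be $\le 0$ gives $a\le\tfrac{2\hat d-2-r}{L\hat d^2}$. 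Finally, absolute stability \eqref{eq:general-vlm-as2} with $\mu=\lambda(h_{n+1}-h_n)=\lambda a$ and the worst-case $\lambda=-L$ produces the separate bound $a\le\tfrac{4}{L(\hat c+\hat d)}=\tfrac{4}{L(2\hat d+2+r)}$.

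The optimization then closes in three moves. Because $r$ appears only in the numerator of $J$ and $\tfrac{r^2}{2+r}$ is strictly increasing for $r>0$, the constraint $r\ge 2$ is active and I would set $r=2$. For fixed $r$, $J$ decreases in $a$, so I maximize the admissible $a$, i.e.\ maximize the smaller of the two upper bounds over $\hat d$. The key algebraic observation is the identity $(2\hat d-2-r)(2\hat d+2+r)-4\hat d^2=-(2+r)^2<0$, which shows the cond4-bound is always strictly below the absolute-stability bound; hence \eqref{eq:lyapunov decrease monotonically cond4} is the binding constraint and $a_{\max}(r)=\max_{\hat d}\tfrac{2\hat d-2-r}{L\hat d^2}$. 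A one-variable calculus step gives the maximizer $\hat d=2+r$ with $a_{\max}(r)=\tfrac{1}{L(2+r)}$, whence $J=\tfrac{r^2}{a_{\max}(r)(2+r)}=r^2L$. Minimizing $r^2L$ over $r\ge 2$ selects $r=2$, giving $\hat d=4$, $\hat c=8$, $a=\tfrac{1}{4L}$ and optimal value $J=4L$ — precisely the NAG-c parameters.

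I expect the main obstacle to be the asymptotic analysis of \eqref{eq:lyapunov decrease monotonically cond4}, which mixes the three sequences $A_{n+1}$, $P_n:=h_{n+1}(\xi_{n+1}+r)d_n$, and $B_n$ of differing growth; one must verify that the $O(n)$ coefficient is the genuine obstruction and that subleading terms do not alter feasibility. A secondary care point is confirming that the optimizer $a=\tfrac{1}{L(2+r)}$ lies strictly inside the absolute-stability region — indeed $\tfrac{1}{L(2+r)}<\tfrac{4}{3L(2+r)}$ at $\hat d=2+r$ — so the minimizer is admissible rather than a boundary artifact, consistent with the known fact that $a=\tfrac{1}{4L}$ keeps all four conditions valid for every $n$.
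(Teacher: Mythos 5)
Your proposal is correct and, after a change of variables, it reproduces the paper's own argument: the objective you isolate, $J=r^2/(a(r+2))$, and your two working constraints are exactly those of the paper's final optimization problem. The paper writes $c_n=\bar{c}_n/((n+1+r)(n+2+r))$, argues $\bar{c}_n$ must grow linearly, sets $\bar{c}_n=\phi n+\psi$ (with $\psi=r^2+3r+2$ pinned by $d_0=0$), and turns the four conditions of \cref{lem:lyapunov decrease monotonically} into exact rational inequalities in $n$ whose leading coefficients yield \eqref{eq:constraint1 in optimality proof} and \eqref{eq:constraint2 in optimality proof}. Your limits satisfy $\hat{c}=\phi$ and $\hat{d}=\phi-r-2$, under which the paper's constraint $La(r-\phi+2)^2+(3r-2\phi+6)\le 0$ is literally your bound $La\hat{d}^2\le 2\hat{d}-2-r$, and the same two-stage maximization ($\hat{d}=r+2$, $a=1/(L(r+2))$, then $r=2$) appears in both proofs. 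Two features of your write-up are genuine improvements in economy: you obtain the linear growth of the coefficients for free from the absolute-stability hypothesis, bypassing the paper's separate $\Theta(n^p)$ case analysis, and your identity $(2\hat{d}-2-r)(2\hat{d}+2+r)-4\hat{d}^2=-(2+r)^2$, showing that the absolute-stability bound on $a$ is never binding, is left implicit in the paper.

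One step, however, needs repair. Conditions \eqref{eq:lyapunov decrease monotonically cond2} and \eqref{eq:lyapunov decrease monotonically cond3} involve the difference $B_{n+1}-B_n$, and your hypotheses give only $B_n=a(2+r)n^2+\mathrm{o}(n^2)$; an $\mathrm{o}(n^2)$ term can oscillate, so the claim $B_{n+1}-B_n\sim 2a(2+r)n$ does not follow, and ``forces $2-r\le 0$'' is not rigorous as stated (your treatment of \eqref{eq:lyapunov decrease monotonically cond4} is fine, since no differences occur there). The fix is cheap: since $A_{n+1}=B_n/\xi_n=B_n/n$, condition \eqref{eq:lyapunov decrease monotonically cond3} is equivalent to $B_{n+1}\le \frac{n+r}{n}B_n$, and iterating gives $B_n\le B_1\prod_{k=1}^{n-1}\frac{k+r}{k}=\Order{n^r}$, which is incompatible with $B_n/n^2\to a(2+r)>0$ unless $r\ge 2$; the paper avoids the issue because its exact parametrization makes every condition an explicit rational function of $n$. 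Relatedly, your assertion that \eqref{eq:lyapunov decrease monotonically cond1} and \eqref{eq:lyapunov decrease monotonically cond2} ``hold automatically'' is neither true for all $n$ in general nor needed: your argument is a relaxation (extract necessary asymptotic constraints, then witness feasibility of the optimum by NAG-c with $a=1/(4L)$, which the paper verifies at the end of \cref{subsec:convergence rate}), and that logic is sound without it. Finally, note that you determine only the asymptotic data $(r,a,\hat{c},\hat{d})$, whereas the paper also fixes $\psi$ via $d_0=0$ to land on NAG-c's exact coefficients; since the theorem's claim concerns only the coefficient of $1/n^2$, this is a presentational rather than a mathematical difference.
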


    \begin{proof}
        We now verify inequalities~\cref{eq:lyapunov decrease monotonically cond1,eq:lyapunov decrease monotonically cond2,eq:lyapunov decrease monotonically cond3,eq:lyapunov decrease monotonically cond4} in \cref{lem:lyapunov decrease monotonically} and derive the conditions on parameters.
        For the sake of simplicity, we replace $c_n$ with $\frac{\bar{c}_n}{(n+1+r)(n+2+r)}$. 
        Then, $A_{n+1}$ and $B_n$ can be written as 
        \begin{align}
            B_n &= an((r+2)n + (r+2)^2 + \bar{c}_n - \bar{c}_{n+1}),\\
            A_{n+1} &= a((r+2)n + (r+2)^2 + \bar{c}_n - \bar{c}_{n+1}).
        \end{align}
        Using this expression, we can rewrite \eqref{eq:lyapunov decrease monotonically cond1}--\eqref{eq:lyapunov decrease monotonically cond3} as follows:
        \begin{align}
            \label{eq:cond1 in optimality proof}
            (r+2)n + (r+2)^2 + \bar{c}_n - \bar{c}_{n+1} &\ge 0,\\
            \label{eq:cond2 in optimality proof} 
            -n\bar{c}_n + (2n+1)\bar{c}_{n+1} - (n+1)\bar{c}_{n+2} + 2(r+2)n + r^2+5r+6 &\ge 0,\\
            \label{eq:cond3 in optimality proof}
            n(\bar{c}_n -2\bar{c}_{n+1} + \bar{c}_{n+2} + r^2-4) - r\bar{c}_n - (r+1)\bar{c}_{n+1} +\bar{c}_{n+2} + r^3 + 3r^2 -r -6 &\ge 0.
        \end{align}
        With respect to~\eqref{eq:lyapunov decrease monotonically cond4}, we obtain
        \begin{align}
            \label{eq:cond4 in optimality proof}
            \frac{\gamma_2 n^2 + \gamma_1 n + \gamma_0}{2Ln} \le 0,
        \end{align}
        where $\gamma_2,\gamma_1,\gamma_0$ is defined as follows:
        \begin{align}
            \gamma_2 &= La(r+2)(La(r+2)+3),\\
            \gamma_1 &= -La(2La(r+2) + 1)\bar{c}_n -La\bar{c}_{n+1} + 2L^2a^2(r+1)(r+2)^2 
            + La(3r+4)(r+2)-r,\\
            \gamma_0 &= L^2a^2 \bar{c}_n^2 - 2L^2a^2(r+2)(r+1)\bar{c}_n + L^2a^2(r^4 + 6r^3 +13r^2 + 12 r +4).
        \end{align}
    
        We now determine which terms have the largest order of $n$.
        Let $\bar{c}_n=\Theta(n^p)$. If $p>1$, then the largest order is $n^{2p}$ from $\gamma_0$, and the coefficient is $L^2a^2$. This term is positive and we cannot prove the convergence as $n \to \infty$.
        If $p<1$, then the largest order is $n^{2}$ from $\gamma_2 n^2$ and the coefficient of the term is $\gamma_2$. Since $\gamma_2$ is positive, we cannot prove the convergence as $n \to \infty$.
        Therefore, we can prove the convergence if and only if $p=1$. Therefore, we let $\bar{c}_n=\phi n+\psi$ and determine $\psi$ using $d_0 = 0$.
        From $\bar{c}_n =\phi n+\psi$, $d_n$ can be written as
        \[
            d_n = - \frac{(r+2-\phi)n + r^2 + 3r + 2 - \psi}{(n+1+r)(n+2+r)}.
        \]
        Then, we set $\psi = r^2 + 3r + 2$.
        The conditions \cref{eq:cond1 in optimality proof,eq:cond2 in optimality proof,eq:cond3 in optimality proof,eq:cond4 in optimality proof} are further simplified into
        \begin{align}   
            \label{eq:cond1 in optimality proof2}
            (r+2)n + (r+2)^2 - \phi&\ge 0,\\
            \label{eq:cond2 in optimality proof2}
            (2r+4) n + r^2 + 5r +6 -\phi &\ge 0,\\
            \label{eq:cond3 in optimality proof2}
            (r^2-4)n + r^3 + 3r^2 -r\phi -r + \phi -6 &\ge 0,\\
            \label{eq:cond4 in optimality proof2}
            \frac{a}{2}((r+2)^2-\phi ) -\frac{r}{2L} + \frac{an}{2}(La(r-\phi+2)^2 + (3r - 2\phi +6))&\le 0.
        \end{align}
        Due to the conditions (\ref{eq:cond3 in optimality proof2}) and (\ref{eq:cond4 in optimality proof2}), parameters $r,\phi,a$ are required to satisfy
        \begin{align}
            \label{eq:constraint1 in optimality proof}
            r^2-4\ge 0,\\
            \label{eq:constraint2 in optimality proof}
            La(r-\phi+2)^2 + (3r - 2\phi +6)\le 0.
        \end{align}
        We minimize the convergence rate $r^2/B_n$ under these constraints. 
        Precisely speaking, we minimize the coefficient of $1/n^2$ in $r^2/B_n$, which is $r^2/a(r+2)$. The optimization problem can be written as follows:
        \[
            \min_{a,r,\phi} \frac{r^2}{a(r+2)} \quad \text{s.t. } (\ref{eq:constraint1 in optimality proof}), (\ref{eq:constraint2 in optimality proof}).
        \]
        By solving this problem, we obtain $r = 2,\phi = 8$ and $a = \frac{1}{4L}$; then, $\psi = 12$. These parameters are identical to those used in the method \eqref{def:eq:vlm-nag}.
    \end{proof}

\section{Towards improved methods beyond the NAG-c}
\label{sec:beyondNAG}

    In the previous section, we considered the method~\eqref{def:general-vlm} assuming a consistency of order 1, zero-stability, and absolute stability.  
    In this setting, NAG-c is optimal in terms of the convergence rate. 
    Therefore, we now consider a broader class of methods in the form of~\eqref{def:general-vlm}, in which we assume the consistency of order $0$ instead of $1$. 
    
    We evaluate the convergence rate of the VLM for the linear ordinary differential equation (ODE),
    \[
        \dot{x}(t) = A x(t).
    \]
    In the optimization settings, this ODE appears when the objective function is quadratic implying that $A$ is identical to $- \nabla^2 f$.
    Because $A$ is a symmetric matrix, its eigenvalue decomposition is expressed as
    \begin{equation}{\label{eq:dahlquist in optimization}}
        \dot{x}_i(t) = \lambda_i x_i(t) \quad(\text{$\lambda_i $ is one of the eivenvalues of $-\nabla^2 f$}),
    \end{equation}
    which is identical to Dahlquist's test equation. 
    Because we assume that $f$ is convex and $L$-smooth, $\lambda_i \in [-L,0]$ holds. 
    For brevity, we consider the case $h_n = \frac{n}{L}$ so that $\mu_n\in [-1,0]$. 
    Because we do not assume the consistency of order $1$, the step size can be freely scaled. 

\subsection{Convergence analysis for Dahlquist's test equation}
We introduce $ \widehat{c}_n = \frac{w_{n+1}-1}{w_{n+1}}c_n $ and $\widehat{d}_n = \frac{w_{n+1}-1}{w_{n+1}}d_n$ for simplicity. 
Subsequently, the characteristic polynomial for \eqref{eq:vlm_test} can be written as
\begin{equation}{\label{eq:2-step polynomial}}
    R_n(z) = z^2  - \left( 1 + b_n + \mu_n \widehat{c}_n \right)z + \left( b_n + \mu_n \widehat{d}_n \right).
\end{equation}
Let $r^{(n)}$ denote the maximum absolute value of the roots of $R_n(z)$. 
Intuitively, 
a smaller $ r^{(n)} $ is better. 

NAG-c corresponds to the case, $b_n = \widehat{d}_n= 4-3w_{n+1}, \ \widehat{c}_{n}= 5-3w_{n+1}$. 
From this, we obtain 
\begin{equation}
    r^{(n)} = \begin{cases}
        {\displaystyle \sqrt{ \left(4 - 3 w_{n+1} \right) \left( 1 + \mu_n \right) }} & (\mu_n \in [-1,\mu_n^*]), \\[5pt]
        {\displaystyle \frac{5 - 3 w_{n+1}}{2} \left( 1 + \mu_n + \sqrt{ \left( 1+\mu_n \right) \left( \mu_n - \mu_n^* \right) } \right)} & ( \mu_n \in (\mu_n^*,0 ]),
    \end{cases}
\end{equation}
where $ \mu_n^* = -\frac{(3w_{n+1}-3)^2}{(5-3w_{n+1})^2} $ (see \cref{fig:roots-NAG-c}). 
When $ \mu_n \in (-1,\mu_n^*) $, the roots are conjugate complex numbers. 
Otherwise, the roots are real numbers. 

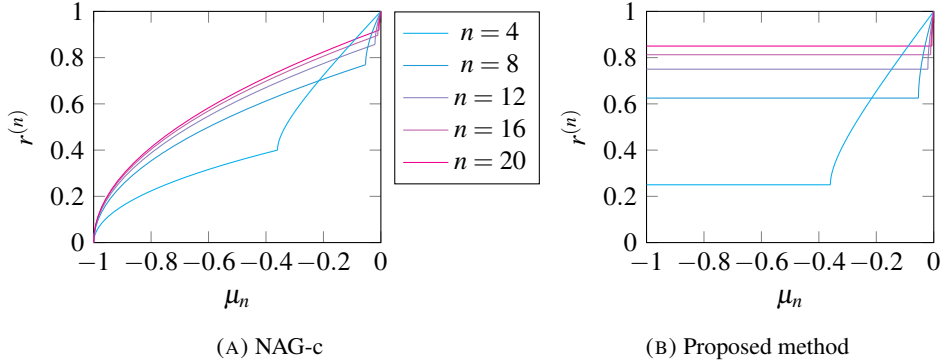
\begin{figure}[htbp]
\centering
    \begin{minipage}{0.58\textwidth}    
    \centering
    \begin{tikzpicture}
        \begin{axis}[width=5.4cm,compat=newest,xlabel=$\mu_n$,ylabel={$r^{(n)}$},xmin=-1,xmax=0,ymin=0,ymax=1,legend style={
            at={(1.3,1)},
            anchor=north}]
            \foreach [count=\i from 0,evaluate=\i as \redfrac using \i*100/4] \n in {4,8,12,16,20}
            {
                \edef\temp{\noexpand\addplot[color=magenta!\redfrac!cyan,domain=0:sqrt(1-9/(2*\n-3)^2)] ({x^2-1},{x*sqrt((\n-3)/\n)});
                \noexpand\addlegendentry{$n=\n$}}
                \temp
            }
            \foreach [count=\i from 0,evaluate=\i as \redfrac using \i*100/4] \n in {4,8,12,16,20}
            {
                \edef\temp{\noexpand\addplot[color=magenta!\redfrac!cyan,domain=0:3/(2*\n-3)] ({-9/(2*\n-3)^2+x^2},{(2*\n-3)/\n/2*(1-9/(2*\n-3)^2+x^2+x*sqrt(1-9/(2*\n-3)^2+x^2))});}
                \temp
            }
        \end{axis}
    \end{tikzpicture}
    \subcaption{NAG-c}
    \label{fig:roots-NAG-c}
    \end{minipage}
    \begin{minipage}{0.41\textwidth}
    \centering
    \begin{tikzpicture}
        \begin{axis}[width=5.4cm,compat=newest,xlabel=$\mu_n$,ylabel={$r^{(n)}$},xmin=-1,xmax=0,ymin=0,ymax=1]
            \foreach [count=\i from 0,evaluate=\i as \redfrac using \i*100/4] \n in {4,8,12,16,20}
            {
            \edef\temp{\noexpand\addplot[color=magenta!\redfrac!cyan,domain=-1:-9/(2*\n-3)^2] {(\n-3)/\n};}
                \temp
            }
            \foreach [count=\i from 0,evaluate=\i as \redfrac using \i*100/4] \n in {4,8,12,16,20}
            {
                \edef\temp{\noexpand\addplot[color=magenta!\redfrac!cyan,domain=0:3/(2*\n-3)] ({-9/(2*\n-3)^2+x^2},{(1+((\n-3)/\n)^2 + ((2*\n-3)/\n)^2 * ( -9/(2*\n-3)^2+x^2 + x*sqrt(1-9/(2*\n-3)^2+x^2) ) )/2});}
                \temp
            }
        \end{axis}
    \end{tikzpicture}
    \subcaption{Proposed method}
    \label{fig:roots-proposed}
    \end{minipage}
    \caption{Maximum absolute values of the roots of the polynomial $R_n $ with respect to $\mu_n$. }
    \label{fig:roots}
\end{figure}

\Cref{fig:roots-NAG-c} indicates that the NAG-c converges quickly for large eigenvalue components and slowly for small eigenvalue components. 

\subsection{Proposed method}

Based on the findings of the previous section, it is intuitive to construct a new VLM based on the optimization problem 
\begin{align}
    \min_{b_n,\widehat{c}_n,\widehat{d}_n} \max_{\mu_n \in [-1,0]} r^{(n)}. 
\end{align}
However, this problem is meaningless because $r^{(n)} =1$ holds for $\mu_n = 0$. 
Therefore, we restrict the range of $\mu_n$. 
Consequently, we consider only complex roots based on our NAG-c observations for the following two reasons. 
First, because $ \lim_{n\to \infty} \mu_n^* = 0 $ holds, 
the eigenvalue component corresponds to the complex roots for a sufficiently large $n$ when considering one fixed eigenvalue. 
Second, the absolute values of complex roots can be written in a simple form to analytically solve the optimization problem. 

Furthermore, to clarify the relationship with the NAG-c case, 
we assume that the region of $\mu_n$, in which the complex roots appear, is the same as that of NAG-c. 
That is, we assume that
\begin{align}
\label{eq:constraint1}
    (1-\widehat{c}_n + b_n)^2 - 4(b_n - \widehat{d}_n) &= 0,\\
\label{eq:constraint2}    
    (1 + \widehat{c}_n \mu_n^* + b_n)^2 - 4(b_n + \widehat{d}_n \mu_n^*) &=0. 
\end{align}

As a result, we consider the optimization problem
\begin{equation}
    \min_{b_n,\widehat{c}_n,\widehat{d}_n} \max_{\mu_n \in [-1,\mu_n^*]} r^{(n)} \qquad \text{s.t. }  \eqref{eq:constraint1}, \eqref{eq:constraint2}.
\end{equation}
Because the absolute value of the roots can be written as $ \sqrt{ b_n + \mu_n d_n } $ for $\mu \in 
[-1,\mu_n^*]$ and the square root is monotonic, 
the above problem is equivalent to the optimization problem
\begin{align}{\label{optimization problem for proposed method}}
    \min_{b_n,\widehat{c}_n,\widehat{d}_n} \max \left\{b_n - \widehat{d}_n,b_n + \mu_n^* \widehat{d}_n \right\} \qquad \mathrm{s.t.}\; \eqref{eq:constraint1}, \eqref{eq:constraint2}.
\end{align}

By solving this problem, we obtain the proposed method (see \cref{app:proposed} for the derivation)
\begin{equation}\label{def:proposed method}
    \begin{split}
    &x_{n+2} - (1 + (4-3w_{n+1})^2)x_{n+1} + (4- 3w_{n+1})^2 x_n\\
    &\quad = h_{n+1} \frac{w_{n+1}-1}{w_{n+1}} (5-3w_{n+1})^2 g(x_{n+1})
    \end{split}
\end{equation}
(see \cref{fig:roots-proposed} for the $ r^{(n)}$ of this method). 

The proposed method satisfies a consistency of order $0$ instead of order $1$. 
In addition, the proposed method is zero-stable (it can be verified in a manner similar to \cref{thm:zero-stability of vlm}).
In this sense, the method is beyond the scope of \cref{thm:Nesterov-optim} and may outperform NAG-c. 
However, we cannot prove that the convergence rate of the proposed method using \cref{lem:lyapunov decrease monotonically}. 
We also cannot prove the absolute stability in the sense of \cref{def:vlm_absolute_stability}. 

\subsection{Numerical experiment}\label{subsec:numerical experiment}

We conducted numerical experiments to compare our proposed method \eqref{def:proposed method} with NAG-c. 
All experiments were performed in Python3.8 and on AMD EPYC 7413 24-Core processors with an RTX A5000 GPU. We fixed the step size $h_n = an + 3a$ and searched for the best parameter $a = i\times 10^{j}\;(i \in \{1,2,3,4,5,6,7,8,9\}, j \in \mathbb{Z})$ for each problem, except for the function $\sum_{i = 1}^6 ix_i^2$. 
With this function, we set $a = \frac{1}{6}$.
We used five test functions:
(a) quadratic functions with a Hilbert matrix, $H \in \mathbb{R}^{10,000\times 10,000}$, (b) one-dimensional (1D) Cahn--Hilliard (CH) problem ($d=999$)
\begin{align}
       \min_{U \in \mathbb{R}^N} & \ \sum_{k=1}^N \left(\frac{U_k^4}{4} - \frac{U_k^2}{2} \right)\Delta x + \sum_{k=1}^{N-1} \frac{1}{2} (\delta^+ U_k)^2 \Delta x + \frac{1}{2} ((\delta^+ U_1)^2 + (\delta^+ U_{N-1})^2)\Delta x\\
       \mathrm{s.t.} & \ U_1 = -1, \quad U_N =1
\end{align}
($N = 1001$, $\Delta x = \frac{1}{N-1}$,  $ \delta^+ U_k = \frac{U_{k+1}-U_k}{\Delta x} $, $x_0 = (-1,-1 + 1/(N-1),...,1-1/(N-1),1)$), 
which is a discretization of the problem
 \begin{align}    
    \min_u & \ \int_0^1 \frac{(u(x))^4}{4} - \frac{(u(x))^2}{2} + \frac{1}{2}\left(\dif{u}{x}\right)^2 \mathrm{d}x, \\
    \mathrm{s.t.} & \ u(0) = -1, \quad u(1)=1,
\end{align}
(c) the LogSumExp function $\sigma\log{\left(\sum_{i=1}^m \exp{((a_i^\trans x - b_i)/\sigma)}\right)}$ ($\sigma = 10, m = 10^5, d = 10^4$, dataset $a_i\;(i=1,...,m)$ is randomly generated from $\mathcal{N}(0,I_n)$, and $b_i = a_i^\trans \zeta + \varepsilon_i$, where the entries of $\zeta\in \mathbb{R}^n$ and $\varepsilon\in \mathbb{R}^m$ are sampled from $\mathcal{N}(0,10)$ and $\mathcal{N}(0,1)$, respectively), 
(d) a logistic regression using realistic datasets
\[
    \min_{x} \frac{1}{m}\sum_{i=1}^m f_i(x) + \lambda \|x\|^2,
\]
where $f_i = \ln{(1+e^{-b_i (a_i^\trans x)})}$, $(a_i,b_i)$ is $i$-th data and $b_i \in \{-1,1\}$
(when we use the {\em real-smi} datasets, $m = 72,309$ and the dimension of $x$ is 20,958, $\lambda = 10^{-10}$), 
and (e) a well-conditioned function.

\begin{figure}[htbp]
    \centering
    \begin{minipage}{0.32\linewidth}
    \centering
        \includegraphics[scale = 0.26]{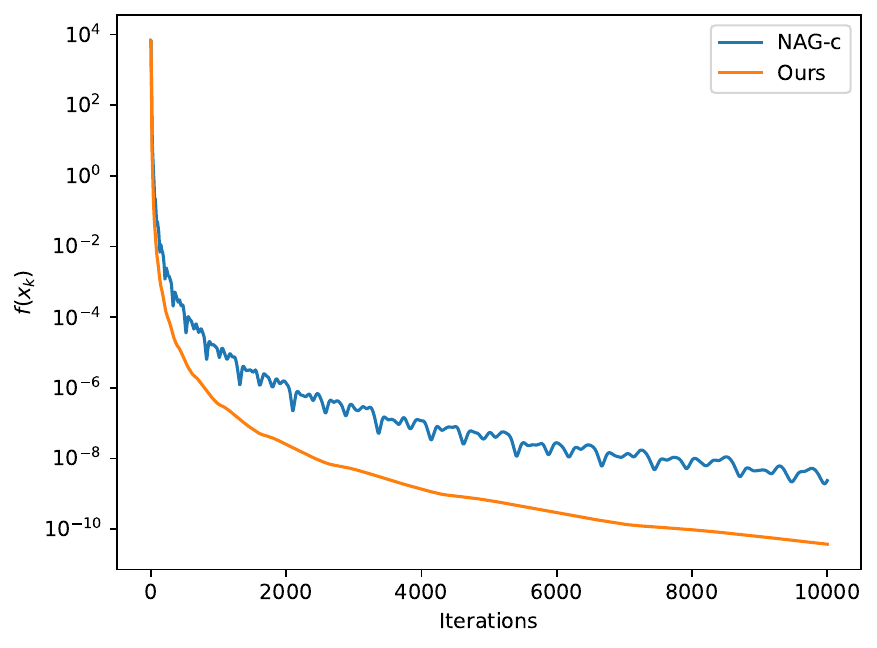}
        \subcaption{$\frac{1}{2}x^\trans H x$}
        \label{fig:subcap:hilbert}
    \end{minipage}
    \begin{minipage}{0.32\linewidth}
        \centering
        \includegraphics[scale = 0.26]{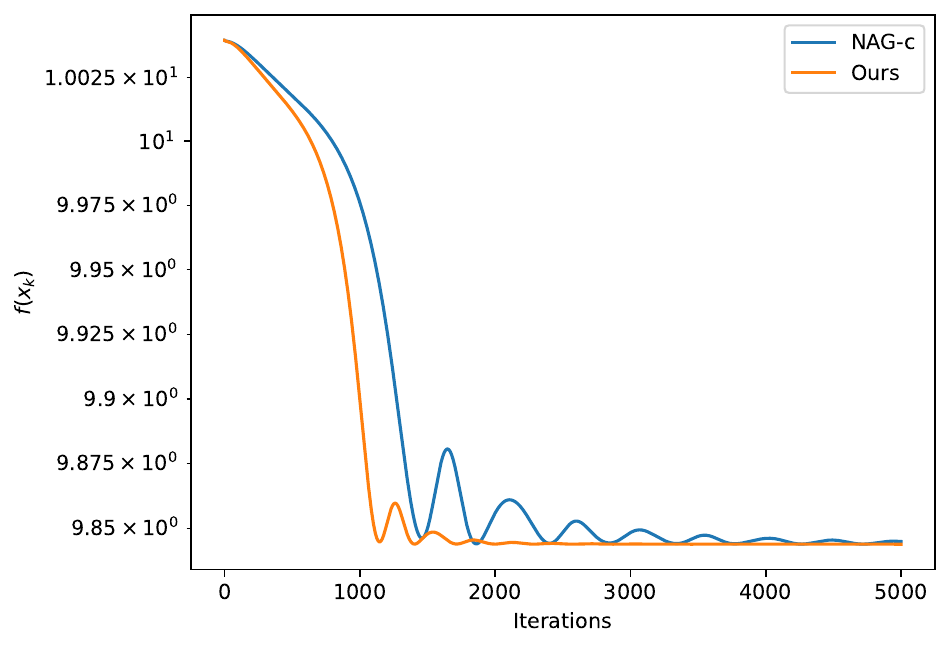}
        \subcaption{1D CH problem}
        \label{fig:subcap:cahn-hilliard}
    \end{minipage}
    \begin{minipage}{0.32\linewidth}
        \centering
        \includegraphics[scale = 0.26]{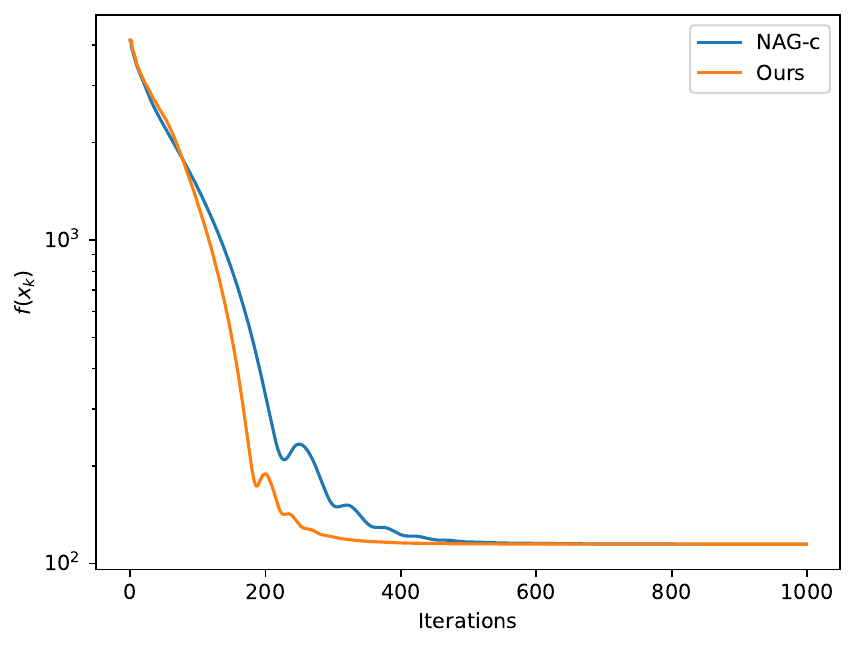}
        \subcaption{LogSumExp function}
        \label{fig:subcap:logsumexp}
    \end{minipage}
    \newline
    \begin{minipage}{0.32\linewidth}
    \centering
        \includegraphics[scale = 0.25]{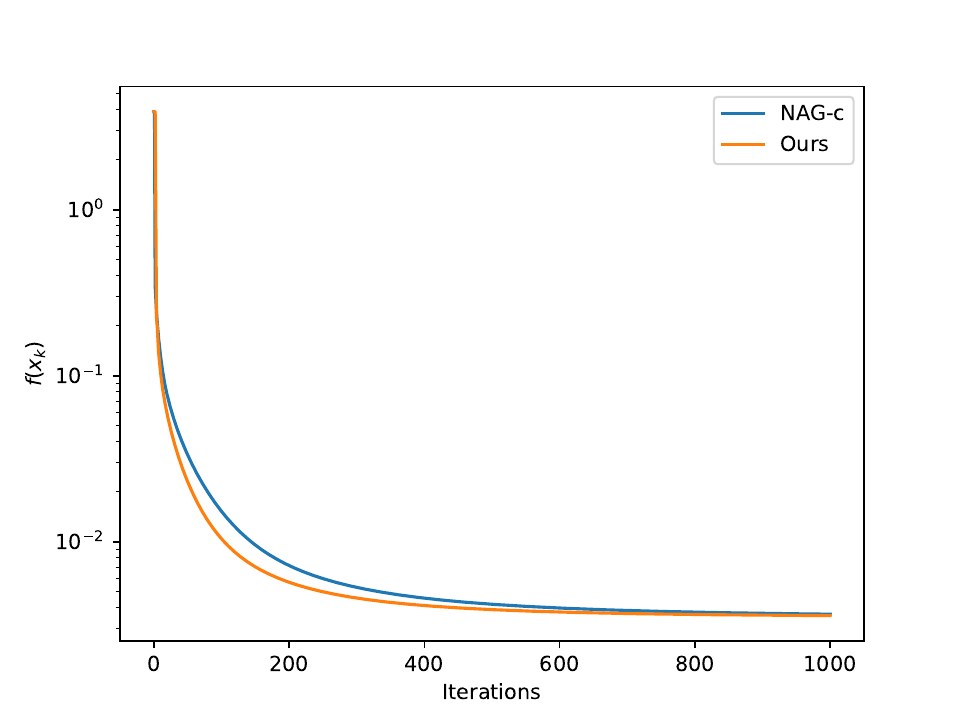}
        \subcaption{Logistic regression}
        \label{fig:subcap:logistic}
    \end{minipage}
    \begin{minipage}{0.32\linewidth}
        \centering
        \includegraphics[scale = 0.25]{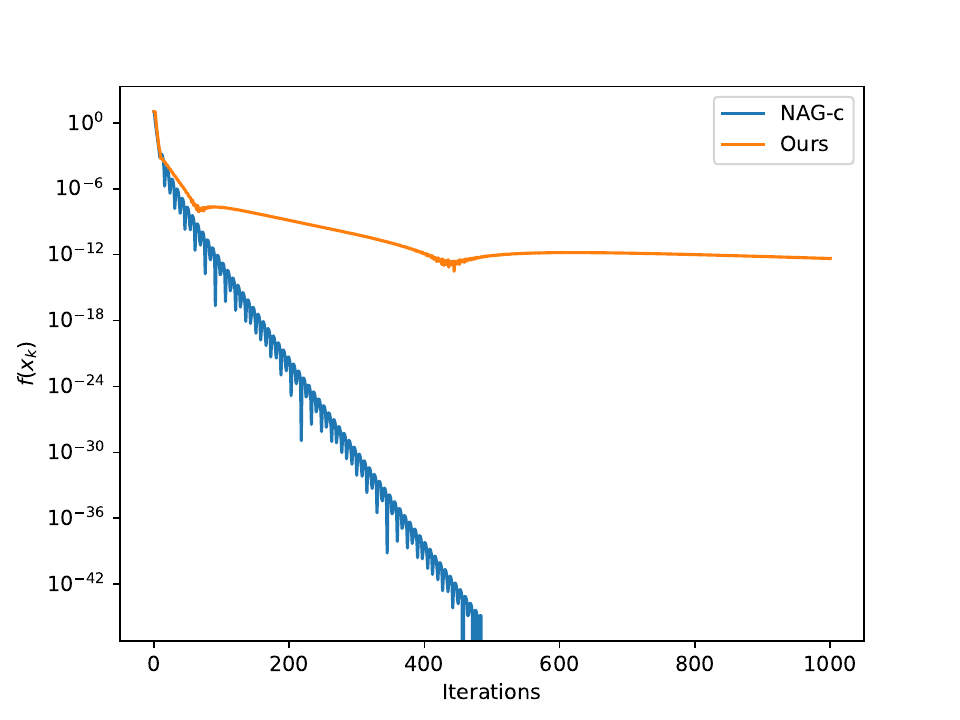}
        \subcaption{$\sum_{i=1}^6 ix_i^2$}
        \label{fig:subcap:simple}
    \end{minipage}
    \caption{Results of numerical experiments for five test functions.}
    \label{fig:numerical experiment}
\end{figure}

The iteration-versus-function values for each of the six test functions are shown in \cref{fig:numerical experiment}.
In \cref{fig:subcap:hilbert}, the proposed method outperforms NAG-c because the quadratic function with the Hilbert matrix is ill-conditioned, owing to the eigenvalues of ill-conditioned functions often being distributed across a large range. Moreover, the proposed method is designed to quickly converge to the optimum for all eigenvalue components. However, when the eigenvalues of the objective functions are concentrated, the proposed method is less effective than NAG-c. This phenomenon can be observed in \cref{fig:subcap:simple}. However, the proposed method is effective when using the restart technique~\cite{OC2015}. We demonstrate this result later. 

According to \cref{fig:subcap:cahn-hilliard}, the Cahn--Hilliard problem, which is also ill-conditioned, the proposed method converges faster than the NAG-c, although the objective function is not quadratic. 
Furthermore, when the objective function is $L$-smooth and not strongly convex (\cref{fig:subcap:logsumexp}), the proposed method is just as effective as NAG-c.
We used real-sim datasets for the logistic regression in \cref{fig:subcap:logistic}, and the proposed method was demonstrated to be as effective as NAG-c under $l_2$-regularization.

Because NAG-c frequently uses a restart technique, we compared its performance. We restarted the method when the function value did not decrease (i.e., $f(x_{n+1}) > f(x_n)$).
In \cref{fig:numerical experiments with restart}e, the proposed method also outperforms NAG-c even when the objective function is not ill-conditioned.  

\begin{figure}[htbp]
\centering
    \begin{minipage}{0.32\linewidth}
        \centering
        \includegraphics[scale = 0.25]{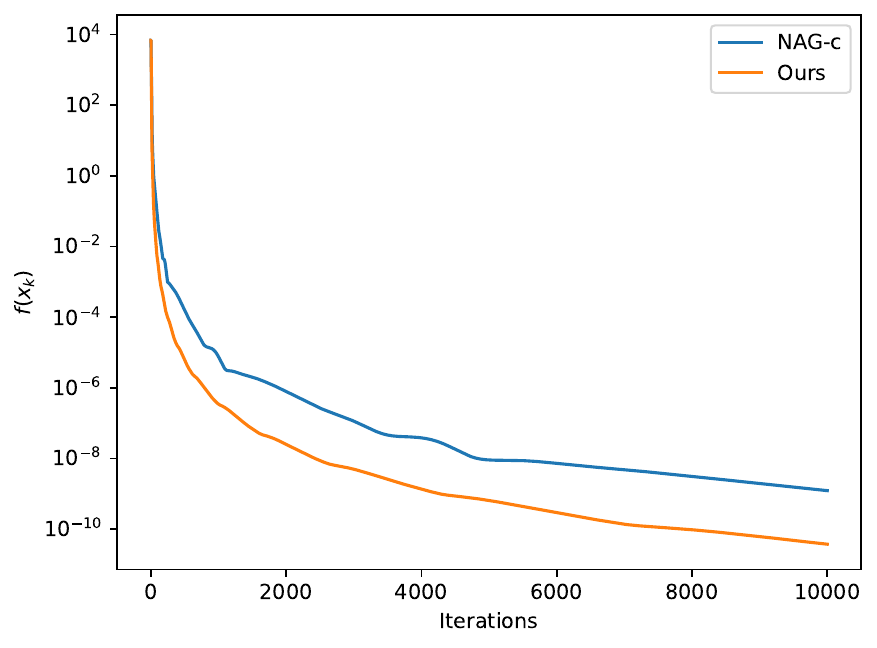}
        \subcaption{$\frac{1}{2}x^\trans H x$}
    \end{minipage}
    \begin{minipage}{0.32\linewidth}
        \centering
        \includegraphics[scale = 0.25]{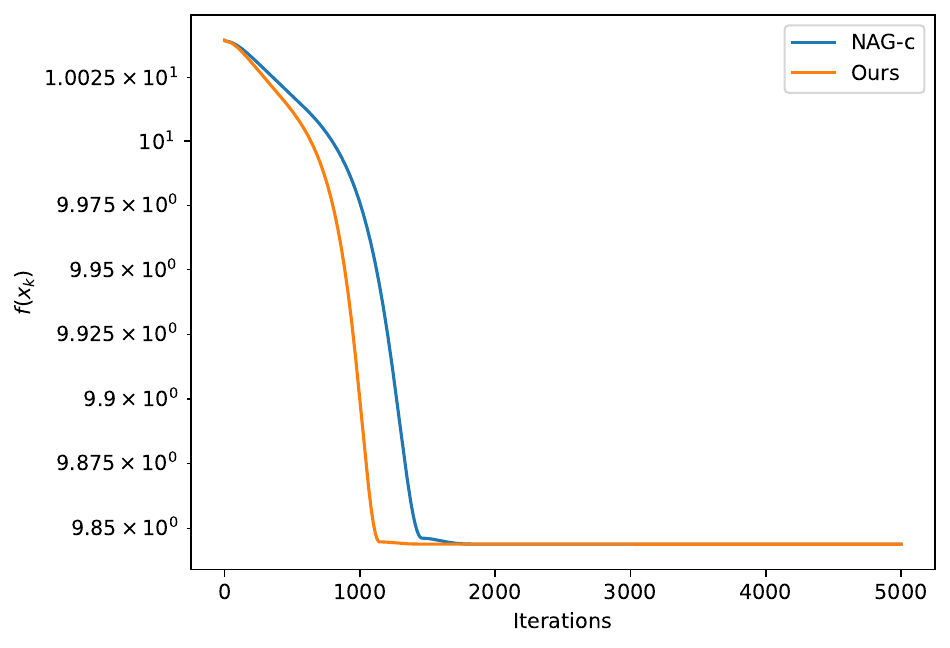}
        \subcaption{1D CH problem}
    \end{minipage}
    \begin{minipage}{0.32\linewidth}
        \centering
        \includegraphics[scale = 0.25]{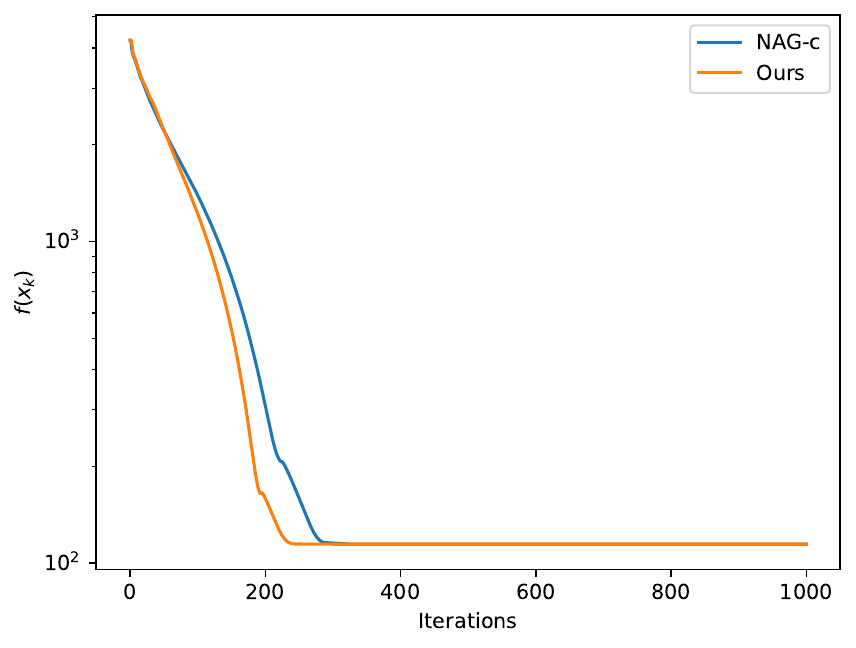}
        \subcaption{LogSumExp function}
    \end{minipage}
    \newline
    \begin{minipage}{0.32\linewidth}
    \centering
        \includegraphics[scale = 0.25]{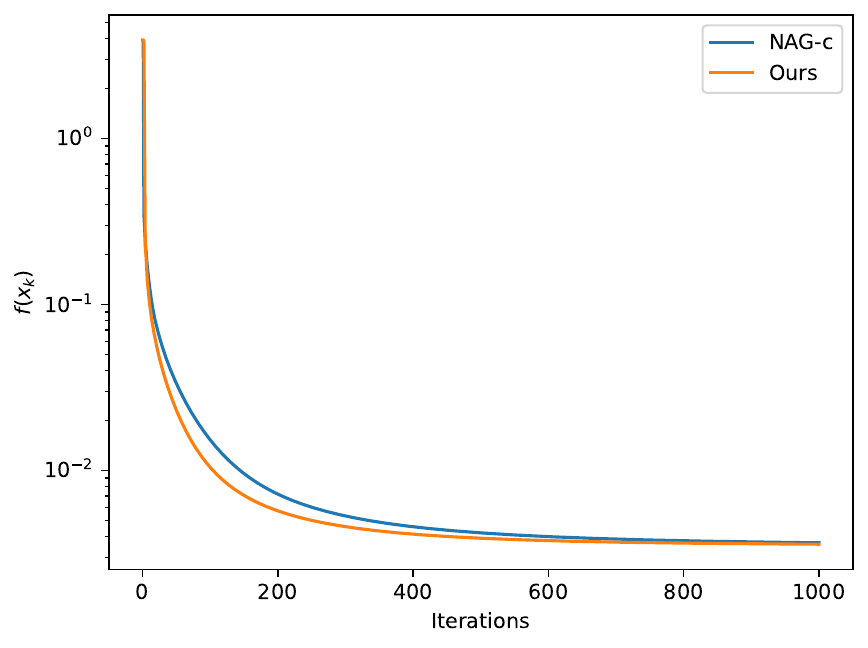}
        \subcaption{Logistic regression}
    \end{minipage}
    \begin{minipage}{0.32\linewidth}
        \centering
        \includegraphics[scale = 0.25]{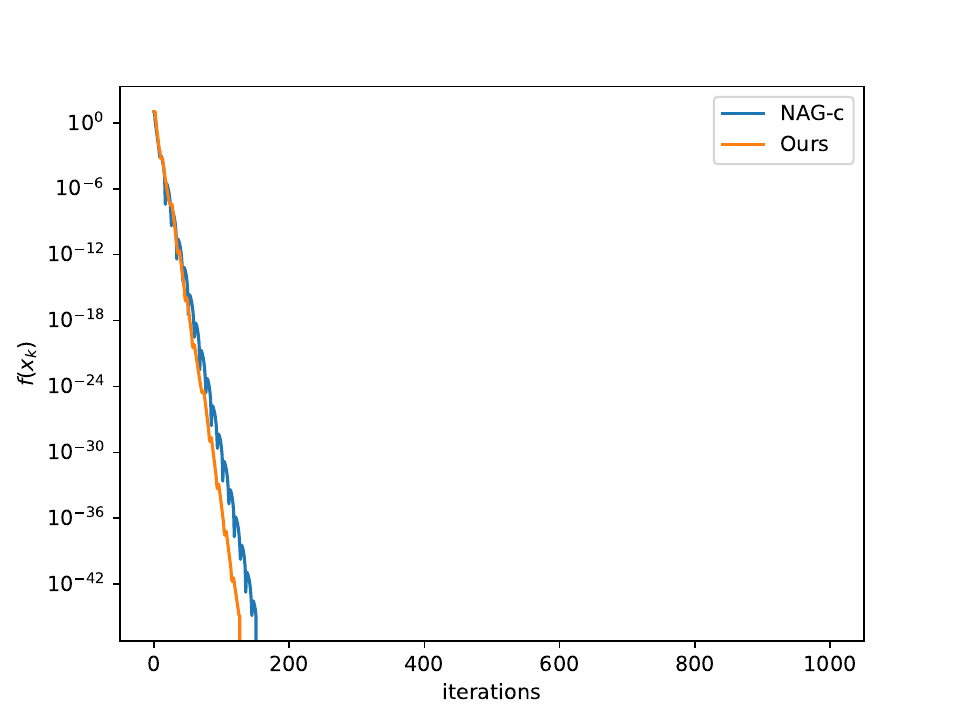}
        \subcaption{$\sum_{i=1}^6 ix_i^2$}
    \end{minipage}
    \caption{Results of numerical experiments with restart.}
    \label{fig:numerical experiments with restart}
\end{figure}

\section{Conclusions and future work}
We analyzed the NAG-c method for $L$-smooth and convex functions from a VLM perspective and demonstrated that it can be viewed as a numerical integration method for the gradient flow.
By considering a general two-step VLM, we derived the conditions under which the algorithms converge at a rate of $O(1/n^2)$.
Furthermore, we proved that one of the optimal algorithms that is consistent of order $1$ and stable is identical to NAG-c.
Finally, we considered the class that is consistent of order $0$ and analyzed the linear ODE case. 
Based on these analyses, we proposed a novel method that outperforms NAG-c when the objective functions are ill-conditioned. 

Future studies must investigate the precise convergence rate of the proposed method. Several assumptions were made with focus on analytically determining the optimal parameters when constructing the proposed method; however, it is unknown whether a better method can be derived without these assumptions.
In this study, only two-step VLMs were considered; however, extensions to three or more steps are also possible. 

\bibliographystyle{abbrv}
\bibliography{references}

\begin{appendix}

\section{Technical Lemmas}

In this section, we introduce several technical lemmas that are used in the proof of theorems in the main part. 

\begin{lemma}\label{lem:tech1}
    Let $ \hat{\gamma} $ be a positive real number. 
    There exists $ M$ such that
    \[ S_{n,l} \coloneqq \abs*{ \sum_{ j = n }^{ n + l } (-1)^{ j-1-n } \frac{ \paren*{n - 3 + \hat{\gamma}} \paren*{n - 2 + \hat{\gamma}} \paren*{n - 1 + \hat{\gamma}} }{ \paren*{j - 3 + \hat{\gamma}} \paren*{j - 2 + \hat{\gamma}} \paren*{j - 1 + \hat{\gamma}} } } \le M \]
    holds for all $ n , l \in \Z_{\ge 0} $. 
\end{lemma}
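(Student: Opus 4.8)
The plan is to convert the alternating weights $(-1)^{j-1-n}$ into an alternating-series estimate, after noticing that the numerator is exactly the $j=n$ instance of the denominator. Writing $Q_j \coloneqq \paren*{j-3+\hat{\gamma}}\paren*{j-2+\hat{\gamma}}\paren*{j-1+\hat{\gamma}}$, the numerator equals $Q_n$, so
\[
    S_{n,l} = \abs*{ \sum_{j=n}^{n+l} (-1)^{j-1-n} \frac{Q_n}{Q_j} }.
\]
The structural fact I would exploit is $Q_{j+1}/Q_j = \paren*{j+\hat{\gamma}}/\paren*{j-3+\hat{\gamma}}$, which exceeds $1$ as soon as $j-3+\hat{\gamma}>0$; hence, past the initial indices, $Q_j$ is positive and strictly increasing, so the magnitudes $\abs*{Q_n/Q_j}$ form a nonincreasing sequence and the alternating structure can be used.

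Second, I would treat the ``large $n$'' regime directly. Fix an integer $N_0$ with $N_0-3+\hat{\gamma}>0$ (one may take $N_0=0$ when $\hat{\gamma}>3$). For every $n\ge N_0$ and all $j\ge n$, the three factors of $Q_j$ are positive, so $a_j\coloneqq Q_n/Q_j$ is positive, nonincreasing, with $a_n=1$. Since $(-1)^{j-1-n}$ alternates, the inner sum equals $-a_n+a_{n+1}-a_{n+2}+\cdots$, and the elementary bound for alternating series with nonincreasing nonnegative terms shows that every partial sum lies in $[-a_n,0]$. Therefore $S_{n,l}\le a_n=1$ for all $l$, uniformly over $n\ge N_0$.

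Third, I would dispose of the finitely many indices $n\in\{0,\dots,N_0-1\}$ by splitting the sum at $j=N_0$. The head $\sum_{j=n}^{N_0-1}$ has a fixed finite number of terms, each a constant $\abs*{Q_n/Q_j}$ depending only on $\hat{\gamma}$; the tail $\sum_{j=N_0}^{n+l}$ again has alternating sign and nonincreasing positive magnitudes $\abs*{Q_n}/Q_j$, so the same estimate bounds it by $\abs*{Q_n}/Q_{N_0}$, independent of $l$. Taking $M$ to be the maximum of $1$ and these finitely many constants finishes the proof. The main obstacle is precisely this transient regime: monotonicity of $Q_j$, and hence the alternating-series estimate, fails while some factor $j-3+\hat{\gamma}$ is still nonpositive, so the crux is to isolate this finite block and bound its contribution by a constant. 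A minor degeneracy occurs when $\hat{\gamma}\in\{1,2,3\}$, where some $Q_j$ vanishes and $S_{n,l}$ must be read through the original product form $\prod_{i=n}^{j-1}\paren*{3w_{i+1}-4}$ of \cref{thm:zero-stability of vlm}, in which the offending terms vanish and the remaining sum is finite, hence trivially bounded.
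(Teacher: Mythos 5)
Your proof is correct, and it takes a genuinely different (and cleaner) route than the paper's. Writing $Q_j=(j-3+\hat{\gamma})(j-2+\hat{\gamma})(j-1+\hat{\gamma})$ as you do, both arguments split into a transient regime and a main regime where all factors of $Q_j$ are positive; the difference is in how the main regime is handled. You observe that $Q_j$ is increasing there, so $a_j=Q_n/Q_j$ is positive and nonincreasing with $a_n=1$, and you quote the Leibniz partial-sum bound for alternating series to get $S_{n,l}\le 1$ uniformly for $n\ge N_0$. The paper instead carries out the cancellation by hand: it pairs consecutive terms via the telescoping identity $\frac{1}{Q_j}-\frac{1}{Q_{j+1}}=\frac{3}{(j-3+\hat{\gamma})(j-2+\hat{\gamma})(j-1+\hat{\gamma})(j+\hat{\gamma})}$, then bounds the resulting sum of positive four-factor terms by an integral comparison, ending with a bound that tends to $3/2$ as $n\to\infty$. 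Your version is shorter and gives a sharper constant; the paper's version is essentially a re-derivation of the alternating-series estimate plus an unnecessary quantitative step. In the transient regime the paper splits at $n\le 3$ (independent of $\hat{\gamma}$) and bounds the tail $j\ge n+4$ crudely through absolute values and the convergent series $\sum_j (j-3+\hat{\gamma})^{-3}$, with no use of alternation, whereas you split at a $\hat{\gamma}$-dependent $N_0$ and reuse the alternating bound; both are fine. One point in your favor: you explicitly flag that for $\hat{\gamma}\in\{1,2,3\}$ some $Q_j$ vanishes, so $S_{n,l}$ as written is ill-defined and must be read through the product form $\prod_{i=n}^{j-1}(3w_{i+1}-4)$ coming from \cref{thm:zero-stability of vlm}, where the offending terms are simply zero. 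The paper's proof overlooks this case entirely (its constant $M_0=\max_{n,l\in\{0,1,2,3\}}S_{n,l}$ is undefined there), so your remark patches a small gap in the original argument.
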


\begin{proof}
    First, we show the case where $n$ is less than or equal to $3$.
    Since $ \{ 0, 1,2,3 \} $ is finite, we have $ M_0 = \max_{ n, l \in \{ 0, 1,2,3 \} } S_{n,l} < \infty$. 
    When $ l > 3 $ and $ n \in \{ 0,1,2,3\} $ hold, we see
    \begin{align}
        S_{n,l} 
        &\le M_0 + \abs*{ \paren*{n - 3 + \hat{\gamma}} \paren*{n - 2 + \hat{\gamma}} \paren*{n - 1 + \hat{\gamma}} } \abs*{ \sum_{ j = n + 4 }^{ n + l } \frac{ 1 }{ \paren*{j - 3 + \hat{\gamma}}^3 } } \\
        &\le M_0 + \max_{ m \in \setE{0,1,2,3} } \setE*{ \abs*{ \paren*{m - 3 + \hat{\gamma}} \paren*{m - 2 + \hat{\gamma}} \paren*{m - 1 + \hat{\gamma}} } } \sum_{j=0}^{\infty} \frac{1}{j^3}. 
    \end{align}
    The right-hand side is bounded and the bound is denoted by $M_1$.
    
    Second, we consider the case $ n > 3$. 
    Since 
    \begin{align}
        &\frac{ 1 }{ \paren*{j - 3 + \hat{\gamma}} \paren*{j - 2 + \hat{\gamma}} \paren*{j - 1 + \hat{\gamma}} } - \frac{ 1 }{ \paren*{j - 2 + \hat{\gamma}} \paren*{j - 1 + \hat{\gamma}} \paren*{j + \hat{\gamma}} } \\
        &\quad = \frac{ 3 }{ \paren*{j - 3 + \hat{\gamma}} \paren*{j - 2 + \hat{\gamma}} \paren*{j - 1 + \hat{\gamma}} \paren*{j + \hat{\gamma}} }
    \end{align}
    holds, we have
    \begin{align}
        & \sum_{ j = n }^{ n + l }  \frac{ \paren*{-1}^{ j - n  } }{ \paren*{j - 3 + \hat{\gamma}} \paren*{j - 2 + \hat{\gamma}} \paren*{j - 1 + \hat{\gamma}} } \\
        &\quad \le
        \sum_{j = 0}^{ \left\lfloor \frac{l+2}{2} \right\rfloor} \frac{ 3 }{ \paren*{n + 2j - 3 + \hat{\gamma}} \paren*{n + 2j - 2 + \hat{\gamma}} \paren*{n + 2j - 1 + \hat{\gamma}} \paren*{n + 2j + \hat{\gamma}} } \\
        &\quad \quad + \frac{1}{ \paren*{ n + l - 2 + \hat{\gamma } } \paren*{ n + l - 1 + \hat{\gamma } } \paren*{ n + l + \hat{\gamma } } }.
    \end{align}
    In addition, we see
    \begin{align}
        &\sum_{j = 0}^{\infty} \frac{ 3 }{ \paren*{n + 2j - 3 + \hat{\gamma}} \paren*{n + 2j - 2 + \hat{\gamma}} \paren*{n + 2j - 1 + \hat{\gamma}} \paren*{n + 2j + \hat{\gamma}} } \\
        &\quad \le \frac{3}{\paren*{n-3+\hat{\gamma}}^4} + \int_{0}^{\infty} \frac{ 3 }{ \paren*{ 2x + n - 3 + \hat{\gamma} }^4 } \rd x \\
        &\quad = \frac{1}{ \paren*{ n - 3 + \hat{\gamma} }^4 } + \frac{1}{ 2 \paren*{ n - 3 + \hat{\gamma} }^3 }.
    \end{align}
    Therefore, we have 
    \begin{align}
        S_{n,l}
        &\le \frac{ \paren*{n - 2 + \hat{\gamma}} \paren*{n - 1 + \hat{\gamma}} }{ \paren*{ n - 3 + \hat{\gamma} }^3 } + \frac{ 3 \paren*{n - 2 + \hat{\gamma}} \paren*{n - 1 + \hat{\gamma}} }{ 2\paren*{ n - 3 + \hat{\gamma} }^2 }
    \end{align}
    Because the right-hand side converges to $ \frac{3}{2}$ as $ n \to \infty $, 
    it is bounded and the bound is denoted by $M_2$. 

    In summary, we obtain the lemma with $ M = \max \setE*{M_1, M_2 } $. 
\end{proof}

\begin{lemma}\label{lem:the roots of quadratic equation}
    The roots of the quadratic equation, $x^2 + bx + c =0$, lie in the unit circle if and only if $ |c| < 1 $ and $ |b| < 1 + c$ hold. 
\end{lemma}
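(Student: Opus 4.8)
The plan is to reduce the statement to the Routh--Hurwitz criterion for the open left half-plane by means of the bilinear (Möbius) transformation that carries the open unit disk onto the open left half-plane. Writing $p(x) = x^2 + bx + c$, I would introduce the substitution $x = \frac{1+w}{1-w}$, which is a bijection of $\{ \abs{x} < 1 \}$ onto $\{ \Re w < 0 \}$, sending $x = -1$ to $w = \infty$ and $x = 1$ to $w = 0$. Multiplying $p$ by $(1-w)^2$ and substituting yields the real quadratic
\begin{equation*}
    q(w) = (1+w)^2 + b(1+w)(1-w) + c(1-w)^2 = A w^2 + B w + C,
\end{equation*}
with $A = 1 - b + c = p(-1)$, $B = 2(1-c)$, and $C = 1 + b + c = p(1)$. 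Both roots of $p$ lie strictly inside the unit circle if and only if $p(-1) \ne 0$ (so that $q$ genuinely has degree $2$ and no root of $p$ sits at the preimage $x=-1$ of $w=\infty$) and both roots of $q$ lie in the open left half-plane.

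Second, I would establish the elementary left-half-plane criterion for a real quadratic: for $A \ne 0$, both roots of $A w^2 + B w + C$ have strictly negative real part if and only if $A, B, C$ all share the same strict sign. This I would verify by splitting into the real-root and complex-conjugate-root subcases using $w_1 + w_2 = -B/A$ and $w_1 w_2 = C/A$: for real roots, both are negative exactly when the sum is negative and the product positive; for conjugate roots $\alpha \pm \im \beta$, the real part is $-B/(2A)$ and the modulus squared is $C/A > 0$. In both subcases the conclusion is $B/A > 0$ and $C/A > 0$.

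Third, I would translate back and dispose of the degenerate branches. The all-negative sign pattern cannot occur here, since $A < 0$ and $C < 0$ force $1 + c < 0$ (adding them gives $2 + 2c < 0$), i.e.\ $c < -1$, while $B < 0$ forces $c > 1$, a contradiction; hence both roots lie in the left half-plane precisely when $A, B, C > 0$. Reading these off gives $A > 0 \iff b < 1 + c$ and $C > 0 \iff b > -(1+c)$, which together are equivalent to $\abs{b} < 1 + c$ (and in particular force $1 + c > 0$), while $B > 0 \iff c < 1$; combining $c < 1$ with the consequence $c > -1$ of $\abs{b} < 1+c$ yields $\abs{c} < 1$. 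Thus the two conditions $\abs{c} < 1$ and $\abs{b} < 1 + c$ are exactly the left-half-plane conditions on $q$, which completes the equivalence.

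The main obstacle is not the algebra, which is routine, but the careful bookkeeping of the boundary cases so that the biconditional survives the transformation: one must confirm that a root of $p$ on the unit circle (i.e.\ $p(\pm 1) = 0$, equivalently $A = 0$ or $C = 0$) corresponds precisely to equality in one of the asserted strict inequalities, so that the strict conditions $\abs{c} < 1$, $\abs{b} < 1 + c$ correctly exclude the degenerate possibilities and the degree drop of $q$ does not spuriously break the equivalence. As an alternative, the same result can be obtained by a direct case analysis (complex versus real roots) using $\abs{x_1}\abs{x_2} = \abs{c}$ and the discriminant sign, but that route requires checking each implication separately in each case, whereas the Möbius reduction packages both directions at once.
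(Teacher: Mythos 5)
Your proof is correct, but there is nothing in the paper to compare it against: the paper states this lemma in the appendix without any proof, treating it as a known fact (it is the classical Schur--Cohn/Jury stability criterion for a real quadratic). Your route --- the Cayley substitution $x = \frac{1+w}{1-w}$ turning the open unit disk into the open left half-plane, followed by the elementary Routh--Hurwitz criterion ``$A,B,C$ of one strict sign'' for $Aw^2+Bw+C$ --- is a complete and correct derivation. The algebra checks out: $A = p(-1) = 1-b+c$, $B = 2(1-c)$, $C = p(1) = 1+b+c$; the all-negative sign pattern is indeed impossible since $A+C<0$ forces $c<-1$ while $B<0$ forces $c>1$; and the degenerate case $p(-1)=0$ (degree drop of $q$) is correctly excluded on both sides of the equivalence, since $x=-1$ is not in the open disk and $A=0$ is incompatible with $A,B,C>0$. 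One cosmetic remark: the substitution $x=\frac{1+w}{1-w}$ maps $\{\Re w<0\}$ onto $\{\abs{x}<1\}$, i.e.\ your bijection is stated in the reverse direction, though the correspondence you actually use (roots of $p$ in the disk $\leftrightarrow$ roots of $q$ in the half-plane) is the right one. The alternative you mention --- direct case analysis on the discriminant using $\abs{x_1 x_2}=\abs{c}$ and Vieta --- would work equally well and is arguably closer to what the authors implicitly have in mind, but your transform-based argument handles both directions of the biconditional at once, which is its main advantage.
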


\section{Absolute stability of explicit two-step Adams method with variable step size}
\label{app:adams}

The variable step size two-step explicit Adams method can be written as 
\[ x_{n+2} - x_{n+1} = \frac{h_{n+1}}{2} \paren*{ \paren*{2 + w_{n+1} } g \paren*{ x_{n+1 } } - w_{n+1}  g \paren*{ x_{n } } } . \]
When we apply this method to Dahlquist's test equation, 
we obtain
\[ x_{n+2} - \paren*{ 1 + \frac{ \mu_{n+1} }{2} \paren*{ 2 + w_{n+1} } } x_{n+1} + \frac{\mu_{n+1}}{2} w_{n+1} x_n, \]
where $ \mu_{n+1} = \lambda h_{n+1} $. 
In order to apply \cref{prop:ltvd_stability}, we assume $ \lim_{n \to \infty } \mu_n = \mu $, which implies $ \lim_{n \to \infty } w_n = 1 $. 
Then, we obtain the characteristic polynomial
\[ z^2 - \frac{3}{2} \mu z + \frac{z}{2}, \]
which coincides with that for the \emph{fixed step size} two-step explicit Adams method. 
Therefore, \cref{prop:ltvd_stability} shows that the variable step size two-step explicit Adams method is absolutely stable when the step size $h_n$ converges to $h$ and $ \lambda h $ is in the stability region of fixed step size two-step explicit Adams method. 
Similar results hold for the general explicit/implicit Adams and BDF (Backward Differentiation Formula) methods.

\section{Derivation of the Proposed Method}\label{app:proposed}

We now solve the optimization problem~(\ref{optimization problem for proposed method}).
First, we rewrite $\widehat{c}_n$ and $\widehat{d}_n$. From the constraint~(\ref{eq:constraint1}), we have
\begin{equation}{\label{eq: widehat dn}}
    \widehat{d}_n = b_n -\frac{1}{4}(1-\widehat{c}_n + b_n)^2.
\end{equation}
We remove $\widehat{d}_n$ from the constraint~(\ref{eq:constraint2}) with (\ref{eq: widehat dn}), and obtain
\[
    \mu_n^* (1 + \mu_n^*) \widehat{c}_n^2 + (1+\mu_n^*)(b_n -1)^2 = 0.
\]
Due to the definition $\mu_n^* = -\frac{(3-3w_{n+1})^2}{(5-3w_{n+1})^2}$, $\widehat{c}_n$ can be written in
$\widehat{c}_n = \pm \frac{1-b_n}{\sqrt{-\mu_n^*}}$.
We next consider the following four cases respectively;
(i) $\widehat{c}_n = \frac{1-b_n}{\sqrt{-\mu_n^*}}$ and $b_n - \widehat{d}_n \ge b_n + \mu_n^*\widehat{d}_n$,
(ii) $\widehat{c}_n = \frac{1-b_n}{\sqrt{-\mu_n^*}}$ and $b_n - \widehat{d}_n \le b_n + \mu_n^*\widehat{d}_n$,
(iii) $\widehat{c}_n = -\frac{1-b_n}{\sqrt{-\mu_n^*}}$ and $b_n - \widehat{d}_n \ge b_n + \mu_n^*\widehat{d}_n$,
(iv) $\widehat{c}_n = -\frac{1-b_n}{\sqrt{-\mu_n^*}}$ and $b_n - \widehat{d}_n \le b_n + \mu_n^*\widehat{d}_n$.

\paragraph{case (i)}
We confirm the range of $b_n$ that satisfies $b_n - \widehat{d}_n \ge b_n + \mu_n^*\widehat{d}_n$. This inequality is identical to $0\ge (1+\mu_n^*) \widehat{d}_n$. Because $1+\mu_n^*$ is positive, it is sufficient to solve $0\ge \widehat{d}_n$ with respect to $b_n$. From the relations~\eqref{eq: widehat dn} and $\widehat{c}_n = \frac{1-b_n}{\sqrt{-\mu_n^*}}$, we obtain
\[
    \widehat{d}_n
    = b_n - \frac14\left(1 - \frac{1-b_{n}}{\sqrt{-\mu_n^*}} + b_n\right)^2
    = \frac{1}{4\mu_n^*}\left((1+\sqrt{-\mu_n^*})^2 b_n - (1 - \sqrt{-\mu_n^*})^2\right)(b_n -1).
\]
Then, $0\ge \widehat{d}_n$ holds if
\begin{equation}{\label{eq:range of bn 1}}
b_n \in \left( -\infty, \frac{(1 - \sqrt{-\mu_n^*})^2}{(1 + \sqrt{-\mu_n^*})^2} \right] \cup \left[1,\infty \right)
\end{equation} 
holds. 
Next, we minimize $b_n - \widehat{d}_n = -\frac{1}{4\mu_n^*} ((1 + \sqrt{-\mu_n^*})b_n - 1 + \sqrt{-\mu_n^*} )^2$ under (\ref{eq:range of bn 1}). It is obvious that we obtain the optimum value at $b_n =  \frac{(1 - \sqrt{-\mu_n^*})^2}{(1 + \sqrt{-\mu_n^*})^2}$, and the optimum value is $\frac{(1 - \sqrt{-\mu_n^*})^2}{(1 + \sqrt{-\mu_n^*})^2} < 1$.

\paragraph{case (ii)}
By the same argument as case (i), when $b_n$ satisfies
\begin{equation}{\label{eq:range of bn 2}}
    \frac{(1 - \sqrt{-\mu_n^*})^2}{(1 + \sqrt{-\mu_n^*})^2}\le b_n \le 1, 
\end{equation}
$b_n - \widehat{d}_n \le b_n  + \mu_n^*\widehat{d}_n$ holds. Then, we minimize $b_n + \mu_n^* \widehat{d}_n = \frac{1}{4}((1 + \sqrt{-\mu_n^*})b_n + 1 - \sqrt{-\mu_n^*})^2$ under (\ref{eq:range of bn 2}). We obtain the optimal value at $b_n =  \frac{(1 - \sqrt{-\mu_n^*})^2}{(1 + \sqrt{-\mu_n^*})^2}$, and the optimal value is $\frac{(1 - \sqrt{-\mu_n^*})^2}{(1 + \sqrt{-\mu_n^*})^2} < 1$.

\paragraph{case (iii)} 
We confirm the range of $b_n$ that satisfy $b_n - \widehat{d}_n \ge b_n + \mu_n^*\widehat{d}_n$ with $\widehat{c}_n = - \frac{1-b_n}{\sqrt{-\mu_n^*}}$.
From (\ref{eq: widehat dn}), we obtain
\[
    \widehat{d}_n 
    = b_n - \frac{1}{4}\left(1 + \frac{1-b_n}{\sqrt{-\mu_n^*}} + b_n\right)^2
    = \frac{1}{4\mu_n^*}((1 - \sqrt{-\mu_n^*})^2b_n - (1 + \sqrt{-\mu_n^*})^2 )(b_n -1).
\]
From $\mu_n^* < 0$ and $1 + \mu_n^* >0$, when 
\begin{equation}{\label{eq: range of bn 3}}
    b_n \in (-\infty,1] \cup \left[ \frac{(1 + \sqrt{-\mu_n^*})^2}{(1 - \sqrt{-\mu_n^*})^2} , \infty \right)
\end{equation}
holds, $b_n - \widehat{d}_n \ge b_n + \mu_n^*\widehat{d}_n$ is satisfied. We minimize 
\[
    b_n - \widehat{d}_n = - \frac{1}{4\mu_n^*} \left((1 - \sqrt{-\mu_n^*})b_n - 1 - \sqrt{-\mu_n^*}\right)^2
\]
under (\ref{eq: range of bn 3}). The optimum is $b_n = 1$, and the optimal value is one.

\paragraph{case (iv)}
By the same argument as case~(iii), when $b_n$ satisfies
\begin{equation}{\label{eq:range of bn 4}}
    1\le b_n \le \frac{(1 + \sqrt{-\mu_n^*})^2}{(1 - \sqrt{-\mu_n^*})^2},
\end{equation}
$b_n - \widehat{d}_n \le b_n + \mu_n^* \widehat{d}_n$ holds. We minimize $b_n + \mu_n^* \widehat{d}_n = \frac{1}{4}((1 - \sqrt{-\mu_n^*})b_n + 1 + \sqrt{-\mu_n^*})^2$ under (\ref{eq:range of bn 4}). It is obvious that the optimal value is one at $b_n = 1$.

From cases~(i-iv), we obtain $b_n = \left(\frac{1-\sqrt{-\mu_n^*}}{1 + \sqrt{-\mu_n^*}}\right)^2 = (4 -3w_{n+1})^2$, $ c_n = \frac{1 - b_n}{\sqrt{-\mu_n^*}} = (5-3w_{n+1})^2 $, and $d_n = 0$.

\end{appendix}

\end{document}